\newcounter{lemma}[section]
\newcounter{corollary}[section]
\newcounter{remark}[section]
\newcounter{theorem}[section]
\newcounter{proposition}[section]
\newcounter{example}
\numberwithin{equation}{section}
\begin{document}

\markboth{\centerline{E.~SEVOST'YANOV}} {\centerline{ON CONTINUOUS
DISCRETE BOUNDARY EXTENSION...}}

\def\cc{\setcounter{equation}{0}
\setcounter{figure}{0}\setcounter{table}{0}}

\overfullrule=0pt


\author{EVGENY SEVOST'YANOV\\}

\title{
{\bf ON DISCRETE BOUNDARY EXTENSION OF MAPPINGS IN TERMS OF PRIME
ENDS}}

\date{\today}
\maketitle

\begin{abstract}
We study mappings that satisfy the inverse Poletsky inequality in a
domain of the Euclidean space. Under certain conditions on the
definition and mapped domains, it is established that they have a
continuous extension to the boundary in terms of prime ends if the
majorant involved in the Poletsky inequality is integrable over
spheres. Under some additional conditions, the extension mentioned
above is discrete.
\end{abstract}

\bigskip
{\bf 2010 Mathematics Subject Classification: Primary 30C65;
Secondary 31A15, 31A20, 30L10}

\section{Introduction}

In the publication~\cite{SSD}, we studied in sufficient detail the
problem of the local and global behavior of mappings satisfying the
so-called inverse Poletsky inequality, in which the corresponding
majorant is integrable. In particular, the possibility of continuous
extension of these mappings to the boundary of the domain was shown.
In this article, we will show a little more, namely that this result
holds not only for integrable $Q,$ but also for those that have
finite integrals on spheres centered at a fixed point on a set of
radii some ''not very small'' measure. Let us point to examples of
non-integrable functions that have these finite integrals by spheres
and mappings that correspond to them (see, for example,
\cite[Examples~1,2]{SevSkv$_3$}). The key point of the manuscript is
also the discreteness of the extended mappings to the boundary in
terms of prime ends. We would also like to point out that the
manuscript is fundamentally devoted to the study of mappings in
domains with bad boundaries.

\medskip
Let us turn to the definitions. In what follows, $M_p(\Gamma)$
denotes the {\it $p$-modulus} of a family $\Gamma $
(see~\cite[Section~6]{Va}). We write $M(\Gamma)$ instead
$M_n(\Gamma).$ Let $y_0\in {\Bbb R}^n,$ $0<r_1<r_2<\infty$ and
\begin{equation}\label{eq1**}
A=A(y_0, r_1,r_2)=\left\{ y\,\in\,{\Bbb R}^n:
r_1<|y-y_0|<r_2\right\}\,.\end{equation}
Given $x_0\in{\Bbb R}^n,$ we put
$$B(x_0, r)=\{x\in {\Bbb R}^n: |x-x_0|<r\}\,, \quad {\Bbb B}^n=B(0, 1)\,,$$
$$S(x_0,r) = \{
x\,\in\,{\Bbb R}^n : |x-x_0|=r\}\,. $$
Given sets $E,$ $F\subset\overline{{\Bbb R}^n}$ and a domain
$D\subset {\Bbb R}^n$ we denote by $\Gamma(E,F,D)$ a family of all
paths $\gamma:[a,b]\rightarrow \overline{{\Bbb R}^n}$ such that
$\gamma(a)\in E,\gamma(b)\in\,F$ and $\gamma(t)\in D$ for $t \in [a,
b].$ Given a mapping $f:D\rightarrow {\Bbb R}^n,$ a point $y_0\in
\overline{f(D)}\setminus\{\infty\},$ and
$0<r_1<r_2<r_0=\sup\limits_{y\in f(D)}|y-y_0|,$ we denote by
$\Gamma_f(y_0, r_1, r_2)$ a family of all paths $\gamma$ in $D$ such
that $f(\gamma)\in \Gamma(S(y_0, r_1), S(y_0, r_2),
A(y_0,r_1,r_2)).$ Let $Q:{\Bbb R}^n\rightarrow [0, \infty]$ be a
Lebesgue measurable function. We say that {\it $f$ satisfies the
inverse Poletsky inequality at a point $y_0\in
\overline{f(D)}\setminus\{\infty\}$} if the relation
\begin{equation}\label{eq2*A}
M(\Gamma_f(y_0, r_1, r_2))\leqslant \int\limits_{A(y_0,r_1,r_2)\cap
f(D)} Q(y)\cdot \eta^{n}(|y-y_0|)\, dm(y)
\end{equation}
holds for any Lebesgue measurable function $\eta:
(r_1,r_2)\rightarrow [0,\infty ]$ such that
\begin{equation}\label{eqA2}
\int\limits_{r_1}^{r_2}\eta(r)\, dr\geqslant 1\,.
\end{equation}
Using the inversion $\psi(y)=\frac{y}{|y|^2},$ we may also define
the relation~(\ref{eq2*A}) at the point $y_0=\infty.$ A mapping $f:
D \rightarrow{\Bbb R}^n$ is called {\it discrete} if the pre-image
$\{f^{-1}\left(y\right)\}$ of any point $y\,\in\,{\Bbb R}^n$
consists of isolated points, and {\it open} if the image of any open
set $U\subset D$ is an open set in ${\Bbb R}^n.$ A mapping $f$ of
$D$ onto $D^{\,\prime}$ is called {\it closed} if $f(E)$ is closed
in $D^{\,\prime}$ for any closed set $E\subset D$ (see, e.g.,
\cite[Chapter~3]{Vu}). Let $h$ be a chordal metric in
$\overline{{\Bbb R}^n},$
$$h(x,\infty)=\frac{1}{\sqrt{1+{|x|}^2}}\,,$$
\begin{equation}\label{eq3C}
h(x,y)=\frac{|x-y|}{\sqrt{1+{|x|}^2} \sqrt{1+{|y|}^2}}\qquad x\ne
\infty\ne y\,.
\end{equation}
and let $h(E):=\sup\limits_{x,y\in E}\,h(x,y)$ be a chordal diameter
of a set~$E\subset \overline{{\Bbb R}^n}$ (see, e.g.,
\cite[Definition~12.1]{Va}).
Everywhere further the boundary $\partial A $ of the set $ A $ and
the closure $\overline{A}$ should be understood in the sense
extended Euclidean space $\overline{{\Bbb R}^n}.$ A continuous
extension of the mapping $f:D\rightarrow{\Bbb R}^n$ also should be
understood in terms of mapping with values in $\overline{{\Bbb
R}^n}$ and relative to the metric $h$ in~(\ref{eq3C}) (if a
misunderstanding is impossible). Recall that a domain $D\subset{\Bbb
R}^n$ is called {\it locally connected at the point} $x_0\in
\partial D,$ if for any neighborhood $U$ of a point $x_0$ there is a
neighborhood $V\subset U$ of $x_0$ such that $V\cap D$ is connected.
A domain $D$ is locally connected at $\partial D,$ if $D$ is locally
connected at any point $x_0\in \partial D.$ The boundary of the
domain $D$ is called {\it weakly flat} at the point $x_0\in \partial
D, $ if for any $P> 0$ and for any neighborhood $U$ of a point $x_0
$ there is a neighborhood $V\subset U$ of the same point such that
$M(\Gamma(E, F, D))> P$ for any continua $E, F \subset D,$ which
intersect $\partial U$ and $\partial V.$ The boundary of the domain
$D$ is called weakly flat if the corresponding property is fulfilled
at any point of the boundary $D.$

Recall some definitions (see, for example,~\cite{KR$_1$} and
\cite{KR$_2$}). Let $\omega$ be an open set in ${\Bbb R}^k$,
$k=1,\ldots,n-1$. A continuous mapping
$\sigma\colon\omega\rightarrow{\Bbb R}^n$ is called a {\it
$k$-dimensional surface} in ${\Bbb R}^n$. A {\it surface} is an
arbitrary $(n-1)$-dimensional surface $\sigma$ in ${\Bbb R}^n.$ A
surface $\sigma$ is called {\it a Jordan surface}, if
$\sigma(x)\ne\sigma(y)$ for $x\ne y$. In the following, we will use
$\sigma$ instead of $\sigma(\omega)\subset {\Bbb R}^n,$
$\overline{\sigma}$ instead of $\overline{\sigma(\omega)}$ and
$\partial\sigma$ instead of
$\overline{\sigma(\omega)}\setminus\sigma(\omega).$ A Jordan surface
$\sigma\colon\omega\rightarrow D$ is called a {\it cut} of $D$, if
$\sigma$ separates $D,$ that is $D\setminus \sigma$ has more than
one component, $\partial\sigma\cap D=\varnothing$ and
$\partial\sigma\cap\partial D\ne\varnothing$.

A sequence of cuts $\sigma_1,\sigma_2,\ldots,\sigma_m,\ldots$ in $D$
is called {\it a chain}, if:

(i) the set $\sigma_{m+1}$ is contained in exactly one component
$d_m$ of the set $D\setminus \sigma_m,$ wherein $\sigma_{m-1}\subset
D\setminus (\sigma_m\cup d_m)$; (ii)
$\bigcap\limits_{m=1}^{\infty}\,d_m=\varnothing.$

Two chains of cuts  $\{\sigma_m\}$ and $\{\sigma_k^{\,\prime}\}$ are
called {\it equivalent}, if for each $m=1,2,\ldots$ the domain $d_m$
contains all the domains $d_k^{\,\prime},$ except for a finite
number, and for each $k=1,2,\ldots$ the domain $d_k^{\,\prime}$ also
contains all domains $d_m,$ except for a finite number.

The {\it end} of the domain $D$ is the class of equivalent chains of
cuts in $D$. Let $K$ be the end of $D$ in ${\Bbb R}^n$, then the set
$I(K)=\bigcap\limits_{m=1}\limits^{\infty}\overline{d_m}$ is called
{\it the impression of the end} $K$. Throughout the paper,
$\Gamma(E, F, D)$ denotes the family of all paths $\gamma\colon[a,
b]\rightarrow \overline{{\Bbb R}^n}$ such that $\gamma(a)\in E,$
$\gamma(b)\in F$ and $\gamma(t)\in D$ for every $t\in[a, b].$ In
what follows, $M$ denotes the modulus of a family of paths, and the
element $dm(x)$ corresponds to the Lebesgue measure in ${\Bbb R}^n,$
$n\geqslant 2,$ see~\cite{Va}. Following~\cite{Na$_2$}, we say that
the end $K$ is {\it a prime end}, if $K$ contains a chain of cuts
$\{\sigma_m\}$ such that
$\lim\limits_{m\rightarrow\infty}M(\Gamma(C, \sigma_m, D))=0$ for
some continuum $C$ in $D.$ In the following, the following notation
is used: the set of prime ends corresponding to the domain $D,$ is
denoted by $E_D,$ and the completion of the domain $D$ by its prime
ends is denoted $\overline{D}_P.$

Consider the following definition, which goes back to
N\"akki~\cite{Na$_1$}, see also~\cite{KR$_1$}--\cite{KR$_2$}. We say
that the boundary of the domain $D$ in ${\Bbb R}^n$ is {\it locally
quasiconformal}, if each point $x_0\in\partial D$ has a neighborhood
$U$ in ${\Bbb R}^n$, which can be mapped by a quasiconformal mapping
$\varphi$ onto the unit ball ${\Bbb B}^n\subset{\Bbb R}^n$ so that
$\varphi(\partial D\cap U)$ is the intersection of ${\Bbb B}^n$ with
the coordinate hyperplane.

\medskip For a given set $E\subset {\Bbb R}^n,$ we set
$d(E):=\sup\limits_{x, y\in E}|x-y|.$
The sequence of cuts $\sigma_m,$ $m=1,2,\ldots ,$ is called {\it
regular,} if
$\overline{\sigma_m}\cap\overline{\sigma_{m+1}}=\varnothing$ for
$m\in {\Bbb N}$ and, in addition, $d(\sigma_{m})\rightarrow 0$ as
$m\rightarrow\infty.$ If the end $K$ contains at least one regular
chain, then $K$ will be called {\it regular}. We say that a bounded
domain $D$ in ${\Bbb R}^n$ is {\it regular}, if $D$ can be
quasiconformally mapped to a domain with a locally quasiconformal
boundary whose closure is a compact in ${\Bbb R}^n,$ and, besides
that, every prime end in $D$ is regular. Note that space
$\overline{D}_P=D\cup E_D$ is metric, which can be demonstrated as
follows. If $g:D_0\rightarrow D$ is a quasiconformal mapping of a
domain $D_0$ with a locally quasiconformal boundary onto some domain
$D,$ then for $x, y\in \overline{D}_P$ we put:
\begin{equation}\label{eq5}
\rho(x, y):=|g^{\,-1}(x)-g^{\,-1}(y)|\,,
\end{equation}
where the element $g^{\,-1}(x),$ $x\in E_D,$ is to be understood as
some (single) boundary point of the domain $D_0.$ The specified
boundary point is unique and well-defined by~\cite[Theorem~2.1,
Remark~2.1]{IS$_2$}, cf.~\cite[Theorem~4.1]{Na$_2$}. It is easy to
verify that~$\rho$ in~(\ref{eq5}) is a metric on $\overline{D}_P,$
and that the topology on $\overline{D}_P,$ defined by such a method,
does not depend on the choice of the map $g$ with the indicated
property.

We say that a sequence $x_m\in D,$ $m=1,2,\ldots,$ converges to a
prime end of $P\in E_D$ as $m\rightarrow\infty, $ if for any $k\in
{\Bbb N}$ all elements $x_m$ belong to $d_k$ except for a finite
number. Here $d_k$ denotes a sequence of nested domains
corresponding to the definition of the prime end $P.$ Note that for
a homeomorphism of a domain $D$ onto $D^{\,\prime}.$

\medskip
\begin{theorem}\label{th3}{\sl\, Let $D\subset {\Bbb R}^n,$
$n\geqslant 2, $ be a domain with a weakly flat boundary, and let
$D^{\,\prime}\subset {\Bbb R}^n$ be a regular domain. Suppose that
$f$ is open discrete and closed mapping of $D$ onto $D^{\,\prime}$
satisfying the relation~(\ref{eq2*A}) at any point $y_0\in
\partial D^{\,\prime}.$ Suppose that, for each point $y_0\in \partial
D^{\,\prime}$ there is $0<r_*=r_*(y_0)<\sup\limits_{y\in
D^{\,\prime}}|y-y_0|$ such that, for any $0<r_1<r_2<r_*$ there is a
set $E\subset[r_1, r_2]$ of a positive linear Lebesgue measure such
that the function $Q$ is integrable on $S(y_0, r)$ for any $r\in E.$
Then $f$ has a continuous extension
$\overline{f}:\overline{D}\rightarrow\overline{D^{\,\prime}}_P,$
while $\overline{f}(\overline{D})=\overline{D^{\,\prime}}_P.$ }
\end{theorem}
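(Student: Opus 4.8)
The plan is to reduce the theorem to a single pointwise assertion and then derive it from a clash between two modulus estimates. First I would fix $x_0\in\partial D$ and consider the prime-end cluster set $C(f,x_0):=\bigcap_{V}\overline{f(V\cap D)}$, the intersection over neighbourhoods $V$ of $x_0$ of closures taken in $\overline{D^{\,\prime}}_P$. Since $D^{\,\prime}$ is regular, $\overline{D^{\,\prime}}_P$ is a compact metric space for the metric $\rho$ in~(\ref{eq5}), so $C(f,x_0)$ is nonempty and compact. Because a weakly flat boundary is locally connected, the sets $V\cap D$ may be taken connected, their images $f(V\cap D)$ are connected (as $f$ is continuous), and hence $C(f,x_0)$ is connected, being a decreasing intersection of compact connected sets. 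Finally, since $f$ is closed, no interior point of $D^{\,\prime}$ can belong to $C(f,x_0)$, so $C(f,x_0)\subset E_{D^{\,\prime}}$. The whole statement then reduces to showing that each $C(f,x_0)$ is a single prime end: continuity of $\overline f$ is the standard consequence, and surjectivity is handled separately below.

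Second, I would argue by contradiction. If some $C(f,x_0)$ contains two distinct prime ends $P_1\ne P_2$, there are sequences $x_m\to x_0$ and $x_m^{\,\prime}\to x_0$ in $D$ with $f(x_m)\to P_1$ and $f(x_m^{\,\prime})\to P_2$ in $\overline{D^{\,\prime}}_P$. The decisive move is to translate this separation in $\overline{D^{\,\prime}}_P$ into a separation by a \emph{round} spherical ring in $D^{\,\prime}$. Using the regularity of $D^{\,\prime}$ together with the regularity of the chains of cuts representing $P_1$ and $P_2$, I would fix a point $y_0\in\partial D^{\,\prime}$ at which the integrability hypothesis is available and radii $0<r_1<r_2<r_*(y_0)$ so that, for all large $m$, $f(x_m)\in B(y_0,r_1)$ while $f(x_m^{\,\prime})\in{\Bbb R}^n\setminus B(y_0,r_2)$. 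Together with closedness of $f$ and local connectedness of $D$ at $x_0$, this should produce continua $F_1,F_2\subset D$, each meeting $\partial U$ and $\partial V$ for prescribed neighbourhoods $V\subset U$ of $x_0$, with $f(F_1)\subset\overline{B(y_0,r_1)}$ and $f(F_2)\subset{\Bbb R}^n\setminus B(y_0,r_2)$. For such $F_1,F_2$ every path of $\Gamma(F_1,F_2,D)$ carries a subpath whose $f$-image joins $S(y_0,r_1)$ to $S(y_0,r_2)$ inside $A(y_0,r_1,r_2)$, i.e. $\Gamma(F_1,F_2,D)$ is minorised by $\Gamma_f(y_0,r_1,r_2)$, whence $M(\Gamma(F_1,F_2,D))\leqslant M(\Gamma_f(y_0,r_1,r_2))$.

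Third, I would extract the two competing bounds for the fixed ring. For the upper bound I use~(\ref{eq2*A}): writing $\widetilde q(r)$ for the integral of $Q$ over $S(y_0,r)$ and passing to polar coordinates, the right-hand side of~(\ref{eq2*A}) equals $\int_{r_1}^{r_2}\eta^n(r)\,\widetilde q(r)\,dr$, which is minimised over $\eta$ satisfying~(\ref{eqA2}) by $\eta$ proportional to $\widetilde q^{-1/(n-1)}$, giving
\[ M(\Gamma_f(y_0,r_1,r_2))\leqslant\left(\int\limits_{r_1}^{r_2}\widetilde q(r)^{-1/(n-1)}\,dr\right)^{-(n-1)}=:P_0\,. \]
The hypothesis that $Q$ is integrable on $S(y_0,r)$ for $r$ in a set $E\subset[r_1,r_2]$ of positive measure makes $\widetilde q(r)<\infty$ on $E$, so $\int_{r_1}^{r_2}\widetilde q^{-1/(n-1)}\geqslant\int_E\widetilde q^{-1/(n-1)}>0$ and $P_0<\infty$ is a fixed finite number. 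For the lower bound I invoke weak flatness of $\partial D$ at $x_0$: given $P=P_0+1$ and any neighbourhood $U$ of $x_0$, there is $V\subset U$ with $M(\Gamma(F_1,F_2,D))>P_0+1$ for all continua $F_1,F_2$ meeting $\partial U$ and $\partial V$. Choosing the continua from the previous paragraph yields $P_0+1<M(\Gamma(F_1,F_2,D))\leqslant M(\Gamma_f(y_0,r_1,r_2))\leqslant P_0$, a contradiction. Hence each $C(f,x_0)$ is a singleton and $\overline f:\overline D\to\overline{D^{\,\prime}}_P$ is well defined and continuous.

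Finally, surjectivity is a compactness argument: $\overline D$ is compact in $\overline{{\Bbb R}^n}$ and $\overline f$ is continuous into the compact metric space $\overline{D^{\,\prime}}_P$, so $\overline f(\overline D)$ is compact, hence closed; it contains $f(D)=D^{\,\prime}$, which is dense in $\overline{D^{\,\prime}}_P$, and therefore $\overline f(\overline D)=\overline{D^{\,\prime}}_P$. The main obstacle I anticipate is the construction in the second paragraph: converting the abstract convergence $f(x_m)\to P_1$, $f(x_m^{\,\prime})\to P_2$ with $P_1\ne P_2$ into separation by a concrete Euclidean ring $A(y_0,r_1,r_2)$ anchored at a point $y_0$ where the spherical integrability holds, and simultaneously producing continua $F_1,F_2$ that are admissible for weak flatness (meeting both $\partial U$ and $\partial V$) \emph{and} have $f$-images confined to the two sides of the ring, so that the minorisation $\Gamma(F_1,F_2,D)>\Gamma_f(y_0,r_1,r_2)$ genuinely holds. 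Reconciling the prime-end and metric geometry of $\overline{D^{\,\prime}}_P$ with round spheres in $D^{\,\prime}$, by means of the quasiconformal model $g\colon D_0\to D^{\,\prime}$ and the regularity of the representing chains, is where the real work lies.
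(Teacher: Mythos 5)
Your overall skeleton coincides with the paper's: argue by contradiction at a fixed $x_0\in\partial D$, use compactness of $(\overline{D^{\,\prime}}_P,\rho)$ to extract two sequences converging to distinct prime ends $P_1\ne P_2$, pit the weak-flatness lower bound against the upper bound coming from~(\ref{eq2*A}) with the near-extremal choice $\eta_0(t)=1/\bigl(I\,t\,q_{y_0}^{1/(n-1)}(t)\bigr)$ (your computation of the finite bound $\omega_{n-1}/I^{n-1}$, using positivity of $\int_E q_{y_0}^{-1/(n-1)}$ on the set $E$ of positive measure, is exactly the paper's), and finish surjectivity by compactness plus density of $D^{\,\prime}$ in $\overline{D^{\,\prime}}_P$. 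However, the step you yourself flag as ``where the real work lies'' is precisely the step you have not done, and it is the heart of the proof; what you offer in its place does not work. First, your separation claim is incorrect as stated: convergence $f(x_m)\to P_1$ only guarantees $f(x_m)\in d_m$ for the cut-domains $d_m$ of a chain representing $P_1$, and these domains need \emph{not} be contained in Euclidean balls $B(y_0,r_1)$ --- the impression $I(P_1)$ may be a nondegenerate continuum, so $\overline{d_m}$ need not shrink into any fixed small ball. The paper avoids this by never claiming ball containment: it separates via the domains themselves, using only that $\partial d_1\cap D^{\,\prime}\subset\sigma_1\subset S(z_0,r_1)$ and $\partial d_0\cap D^{\,\prime}\subset\sigma_0\subset S(z_0,r_0)$, so that any path whose image starts in $d_1$ and ends in $g_1$ (with $d_0\cap g_0=\varnothing$) must cross both spheres.

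Second, and more seriously, you have no mechanism for producing continua $F_1,F_2$ that simultaneously (i) join $\partial U$ to $\partial V$, as admissibility for weak flatness requires, and (ii) have $f$-images confined to one side of the separating ring. ``Closedness of $f$ and local connectedness of $D$ at $x_0$'' do not produce them: a path in $D$ from $x_m$ to any point at definite distance from $x_0$ has a completely uncontrolled image, and once either image meets both sides of the ring the minorization $\Gamma(F_1,F_2,D)>\Gamma_f(y_0,r_1,r_2)$ fails. The paper's mechanism is \emph{path lifting}, which you never invoke: one joins $f(x_1)$ to $f(x_k)$ by a path $\gamma_k$ lying inside $d_1$ (and $f(y_1)$ to $f(y_k)$ inside $g_1$), then takes a \emph{total} $f$-lifting $\alpha_k$ of $\gamma_k$ starting at $x_k$ --- this exists because $f$ is open, discrete \emph{and} closed (Proposition~\ref{pr4_a}, i.e.\ \cite[Lemma~3.7]{Vu}) --- so that $f(|\alpha_k|)=|\gamma_k|\subset d_1$ automatically. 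The admissibility for weak flatness then comes from the finiteness of the preimage set $f^{\,-1}(f(x_1))$ (\cite[Lemma~3.2]{Vu}): the liftings terminate in a fixed finite set, hence at distance at least some $R_0>0$ from $x_0$, while their starting points $x_k$ tend to $x_0$. This combination of lifting plus finiteness of preimages is exactly what reconciles the two requirements you could not reconcile, and without it the contradiction between~(\ref{eq7}) and~(\ref{eq14}) cannot be set up.
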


\medskip
In~\cite{Vu}, some issues related to the discreteness of a closed
quasiregular map $f:{\Bbb B}^n\rightarrow {\Bbb R}^n$ in
$\overline{{\Bbb B}^n}$ are considered, see~\cite[Lemma~4.4,
Corollary~4.5 and Theorem~4.7]{Vu}. In particular, the following
result holds (see~\cite[Theorem~4.7]{Vu}).

\medskip
{\bf Theorem.} {\sl Let $f:{\Bbb B}^n\rightarrow G^{\,\prime}$ be a
closed non-constant quasiregular mapping and let $G^{\,\prime}$ be
locally connected on the boundary. Then $f$ can be extended to a
continuous mapping $f:\overline{{\Bbb B}^n}\rightarrow
\overline{{\Bbb R}^n}$ such that $N(f)=N(\overline{f})$ and hence
$\overline{f}$ is discrete.}

\medskip
The theorem given below is devoted to a deeper study of this fact,
more precisely, we extend the mentioned result not only to a wider
class of mappings, but also to a wider class of domains. Here we
will consider the case when this extension should be understood in
terms of prime ends. Let us give some definitions.

\medskip
We say that a function ${\varphi}:D\rightarrow{\Bbb R}$ has a {\it
finite mean oscillation} at a point $x_0\in D,$ write $\varphi\in
FMO(x_0),$ if
$$\limsup\limits_{\varepsilon\rightarrow
0}\frac{1}{\Omega_n\varepsilon^n}\int\limits_{B( x_0,\,\varepsilon)}
|{\varphi}(x)-\overline{{\varphi}}_{\varepsilon}|\ dm(x)<\infty\,,
$$
where $\overline{{\varphi}}_{\varepsilon}=\frac{1}
{\Omega_n\varepsilon^n}\int\limits_{B(x_0,\,\varepsilon)}
{\varphi}(x) dm(x).$
We also say that a function ${\varphi}:D\rightarrow{\Bbb R}$ has a
finite mean oscillation at $A\subset \overline{D},$ write
${\varphi}\in FMO(A),$ if ${\varphi}$ has a finite mean oscillation
at any point $x_0\in A.$ Let
\begin{equation}\label{eq12}
q_{z_0}(r)=\frac{1}{\omega_{n-1}r^{n-1}}\int\limits_{S(z_0,
r)}Q(y)\,d\mathcal{H}^{n-1}(y)\,, \end{equation}
and $\omega_{n-1}$ denotes the area of the unit sphere ${\Bbb
S}^{n-1}$ in ${\Bbb R}^n.$  The most important statement of the
manuscript is the following.

\medskip
\begin{theorem}\label{th4}
{\sl\, Let $n\geqslant 2,$ let $D$ be a domain with a weakly flat
boundary and let $D^{\,\prime}$ be a regular domain. Let $f$ be open
discrete and closed mapping of $D$ onto $D^{\,\prime}$ for which
there is a Lebesgue measurable function $Q:{\Bbb R}^n\rightarrow [0,
\infty],$ equal to zero outside $D^{\,\prime},$ such that the
relations~(\ref{eq2*A})--(\ref{eqA2}) hold at any point $y_0\in
\partial D^{\,\prime}.$ Assume that, one of the following conditions hold:

\medskip
1) $Q\in FMO(\partial D^{\,\prime});$

\medskip
2) for any $y_0\in \partial D^{\,\prime}$ there is $\delta(y_0)>0$
such that
\begin{equation}\label{eq5F}
\int\limits_{\varepsilon}^{\delta(y_0)}
\frac{dt}{tq_{y_0}^{\frac{1}{n-1}}(t)}<\infty, \qquad
\int\limits_{0}^{\delta(y_0)}
\frac{dt}{tq_{y_0}^{\frac{1}{n-1}}(t)}=\infty
\end{equation}
for sufficiently small $\varepsilon>0.$

Then $f$ has a continuous extension
$\overline{f}:\overline{D}\rightarrow \overline{D^{\,\prime}}_P$
such that $N(f, D)=N(f, \overline{D})<\infty.$ In particular,
$\overline{f}$ is discrete in $\overline{D},$ that is,
$\overline{f}^{\,-1}(P_0)$ consists only from isolated points for
any $P_0\in E_{D^{\,\prime}}.$ }
\end{theorem}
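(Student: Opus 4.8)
The plan is to prove the theorem in three stages: first reduce to the continuous extension already guaranteed by Theorem~\ref{th3}, then establish that the interior multiplicity $N(f,D)$ is finite and constant, and finally show that passing to the boundary cannot increase the multiplicity. For the reduction, I would check that each of the hypotheses 1) and 2) forces the integrability condition appearing in Theorem~\ref{th3}, namely that for every $y_0\in\partial D^{\,\prime}$ and all small $0<r_1<r_2$ the function $Q$ is integrable on $S(y_0,r)$ for a set of radii of positive linear measure. Under 1), $Q\in FMO(y_0)$ is locally integrable near $y_0$, so by Fubini $q_{y_0}(r)<\infty$, i.e. $Q\in L^1(S(y_0,r))$, for almost every $r$; under 2), the convergence of the first integral in~(\ref{eq5F}) together with the divergence of the second already presupposes $q_{y_0}(t)<\infty$ on a set of radii accumulating at $0$ of positive measure, since otherwise the integrand $1/(t\,q_{y_0}^{1/(n-1)}(t))$ would vanish almost everywhere and both integrals would be zero. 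In either case Theorem~\ref{th3} applies and yields a continuous extension $\overline{f}\colon\overline{D}\to\overline{D^{\,\prime}}_P$ with $\overline{f}(\overline{D})=\overline{D^{\,\prime}}_P$, where $\overline{f}(\partial D)\subset E_{D^{\,\prime}}.$

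For the finiteness of $N(f,D)$ I would use that $f$ is open, discrete and closed onto the domain $D^{\,\prime}$. Each fibre $f^{-1}(y)$ is a relatively closed discrete subset of $D$; were some fibre infinite, it would accumulate at a point $\xi\in\partial D$ along a sequence $x_m\to\xi$ with $f(x_m)=y$, so that $\overline{f}(\xi)=\lim f(x_m)=y\in D^{\,\prime}$ would be an interior value, contradicting $\overline{f}(\xi)\in E_{D^{\,\prime}}.$ Hence the fibres are finite, $f$ is proper, and for a proper open discrete mapping the topological degree $\mu(y,f,D)$ is defined and locally constant, hence constant on the connected domain $D^{\,\prime}$; therefore $N(f,D)=\mu(y,f,D)<\infty.$

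The decisive stage is the estimate $\#\,\overline{f}^{\,-1}(P_0)\le N(f,D)$ for every prime end $P_0\in E_{D^{\,\prime}}.$ Since interior and boundary fibres never mix, this estimate gives $N(f,\overline{D})=N(f,D)<\infty$ and forces $\overline{f}^{\,-1}(P_0)$ to consist of isolated points. I would argue by contradiction: suppose $\xi_1,\dots,\xi_{N+1}$ are distinct points of $\overline{f}^{\,-1}(P_0)$ with $N=N(f,D)$, choose pairwise disjoint neighbourhoods $U_j\ni\xi_j$, write $\widetilde U_j=U_j\cap D$, and let $\{\sigma_m\}$, $d_m$ be a regular chain for $P_0$ (available since $D^{\,\prime}$ is regular). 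The key lemma to establish is that $f(\widetilde U_j)\supset d_m$ for all sufficiently large $m.$ Granting it, one chooses $m^*$ above the finitely many thresholds, picks $y^*\in d_{m^*}\subset\bigcap_j f(\widetilde U_j)$, and finds a preimage of $y^*$ in each disjoint $\widetilde U_j$, whence $\#f^{-1}(y^*)\ge N+1>N$, the desired contradiction. The lemma itself is a connectedness argument: $f(\widetilde U_j)$ is open, it meets every $d_m$ (because $x\to\xi_j$ forces $f(x)\to P_0$, hence $f(x)\in d_m$ eventually), and $f(\widetilde U_j)\cap d_m$ is also closed in the connected set $d_m$, as is seen by taking $z=\lim f(w_l)\in d_m$ with $w_l\in\widetilde U_j$ and a subsequential limit $w\in\overline{U_j}\cap\overline{D}$: the case $w\in\partial D$ is impossible since $\overline{f}(w)$ would be a prime end rather than the interior point $z$, and the case $w\in\partial U_j\cap D$ is excluded once $d_m$ is disjoint from the compact set $\overline{f}(\partial U_j\cap\overline{D}).$

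This last disjointness is the genuine obstacle, since it requires $P_0\notin\overline{f}(\partial U_j\cap\overline{D})$, i.e.\ the neighbourhoods must be chosen with boundaries avoiding $\overline{f}^{\,-1}(P_0)$; this is immediate once $\overline{f}^{\,-1}(P_0)$ is known to be finite, so the crux is to rule out accumulation of boundary preimages of a single prime end \emph{before} the counting argument. I expect this to be the hard part, and I would handle it with the modulus machinery rather than topology: an accumulating family of boundary preimages of $P_0$ produces continua joining two fixed neighbourhoods on which $M(\Gamma(E,F,D))$ is forced to be arbitrarily large by the weak flatness of $\partial D$, while the corresponding family $\Gamma_f$ has modulus bounded through~(\ref{eq2*A}) by $\int Q\cdot\eta^{\,n}$, which stays controlled because of the integrability of $Q$ on spheres secured in the first stage. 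The resulting contradiction eliminates accumulation, makes $\overline{f}^{\,-1}(P_0)$ finite, and thereby both legitimizes the choice of neighbourhoods in the key lemma and delivers the discreteness of $\overline{f}$ on $\overline{D}.$
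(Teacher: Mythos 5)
Your three-stage skeleton --- continuous extension, finiteness of $N(f,D)$, and then the bound $\#\,\overline{f}^{\,-1}(P_0)\leqslant N(f,D)$ obtained by showing that the image of a neighbourhood piece of each boundary preimage covers a tail $d_m$ of the chain --- is the same as the paper's (the paper gets the extension via Remark~\ref{rem3} with $\psi(t)=1/(t\log\frac1t)$ in case 1), cites Martio--Srebro for $N(f,D)<\infty$, and runs Vuorinen's counting scheme in Lemma~\ref{lem3A}); your stages one and two are essentially sound. The genuine gap is in stage three, exactly at the point you flag as ``the genuine obstacle''. Your covering lemma needs disjoint neighbourhoods $U_j$ of the $N+1$ preimages with $P_0\notin\overline{f}(\partial U_j\cap\overline{D})$, i.e.\ with $\partial U_j$ avoiding $\overline{f}^{\,-1}(P_0)$, and you propose to legitimize this by first excluding accumulation of $\overline{f}^{\,-1}(P_0)$ through a weak-flatness-versus-Poletsky estimate. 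That estimate cannot be made: to bound $M(\Gamma(E,F,D))$ from above via~(\ref{eq2*A}) one needs $\Gamma(E,F,D)>\Gamma_f(y_0,r_1,r_2)$, i.e.\ the images of $E$ and $F$ must lie on opposite sides of a fixed spherical annulus $A(y_0,r_1,r_2)$. But every continuum produced by your accumulation scenario reaches close to some preimage of $P_0$, so by continuity of $\overline{f}$ its image clusters at the \emph{same} prime end $P_0$; two such continua are never separated by any annulus around $y_0$, a path joining them may have its image lie entirely inside one small chain domain $d_m$, and~(\ref{eq2*A}) then yields no upper bound whatsoever. Weak flatness makes the upstairs modulus large, but nothing makes it small, so there is no contradiction. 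Note also that non-accumulation is not a weaker preliminary statement you can settle first: any $N+1$ distinct preimages (accumulating or not) must already be contradictory, so this ``preliminary step'' carries the full weight of the theorem.

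The idea missing from your proposal is how the paper sidesteps this circularity: it never needs $\overline{f}^{\,-1}(P_0)$ to be finite, or spheres avoiding it, at any stage. In Lemma~\ref{lem3A} one transfers $P_0$ to an honest boundary point $y_0=g^{\,-1}(P_0)$ of a domain $D_0$ with locally quasiconformal boundary, sets $F=g^{\,-1}\circ f$, takes connected neighbourhoods $U^{\,\prime}_p\subset B(y_0,1/p)$ and, for each of the $k+1$ points $x_i$ (where $k=N(f,D)$), the \emph{component} $V_p^i$ of $F^{\,-1}(U^{\,\prime}_p)$ with $x_i\in\overline{V_p^i}$. Disjointness of these sets for large $p$ is not arranged by choosing good boundaries but proved: $h(\overline{V_p^i})\rightarrow 0$ as $p\rightarrow\infty$, by Lemma~\ref{lem1A}. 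In that lemma the annulus separation genuinely exists, because the shrinking images are tested against the image of a \emph{fixed} continuum $F\subset D$ staying away from $P_0$; the lower modulus bound comes from uniformity of $D$ (Corollary~\ref{cor1}, a consequence of weak flatness), and the upper bound from~(\ref{eq2*B}) with the hypothesis~(\ref{eq3.7.2}). Finally, \cite[Lemma~3.6]{Vu} --- a closed, open, discrete map sends each component of the preimage of a connected open set onto that set --- places a preimage of every point of $U^{\,\prime}_p$ in each of the $k+1$ disjoint components, contradicting $N(f,D)=k$. To repair your argument you would have to replace your ambient neighbourhoods $\widetilde U_j$ and the connectedness lemma by these preimage components and the shrinking-diameter lemma; as written, your proof does not close.
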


\section{Proof of Theorem~\ref{th3}}

Let $D\subset {\Bbb R}^n,$ $f:D\rightarrow {\Bbb R}^n$ be a discrete
open mapping, $\beta: [a,\,b)\rightarrow {\Bbb R}^n$ be a path, and
$x\in\,f^{\,-1}(\beta(a)).$ A path $\alpha: [a,\,c)\rightarrow D$ is
called a {\it maximal $f$-lifting} of $\beta$ starting at $x,$ if
$(1)\quad \alpha(a)=x\,;$ $(2)\quad f\circ\alpha=\beta|_{[a,\,c)};$
$(3)$\quad for $c<c^{\prime}\leqslant b,$ there is no a path
$\alpha^{\prime}: [a,\,c^{\prime})\rightarrow D$ such that
$\alpha=\alpha^{\prime}|_{[a,\,c)}$ and $f\circ
\alpha^{\,\prime}=\beta|_{[a,\,c^{\prime})}.$ The following
statement holds (see \cite[Corollary~II.3.3]{Ri}).

\medskip
\begin{proposition}\label{pr3_a}
Let $f:D \rightarrow {\Bbb R}^n$ be a discrete open mapping, $\beta:
[a,\,b)\rightarrow {\Bbb R}^n$ be a path, and
$x\in\,f^{-1}\left(\beta(a)\right).$ Then $\beta$ has a maximal
$f$-lif\-ting starting at $x.$ If $\beta: (a,\,b]\rightarrow f(D)$
be a path, and $x\in\,f^{-1}\left(\beta(b)\right),$ then $\beta$ has
a maximal $f$-lif\-ting ending at $x.$
\end{proposition}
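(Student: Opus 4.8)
The plan is to reduce the statement to a single local fact by a maximality argument, and then to invoke the structure theory of discrete open mappings for that local fact. First I would dispose of the second assertion by time reversal: if $\beta\colon(a,b]\to f(D)$ and $x\in f^{-1}(\beta(b))$, put $\beta^{*}(t):=\beta(a+b-t)$, so that $\beta^{*}\colon[a,b)\to f(D)$ with $\beta^{*}(a)=\beta(b)$; a maximal $f$-lifting of $\beta^{*}$ starting at $x$ yields, after the substitution $t\mapsto a+b-t$, a maximal $f$-lifting of $\beta$ ending at $x$. Thus it suffices to prove the first assertion.

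For the first assertion I would consider the set $\mathcal P$ of all pairs $(c,\alpha)$ with $a<c\leqslant b$ and $\alpha\colon[a,c)\to D$ a path satisfying $\alpha(a)=x$ and $f\circ\alpha=\beta|_{[a,c)}$, partially ordered by extension: $(c_1,\alpha_1)\leqslant(c_2,\alpha_2)$ iff $c_1\leqslant c_2$ and $\alpha_2|_{[a,c_1)}=\alpha_1$. Every chain in $\mathcal P$ has an upper bound, namely the union of its members, which is again a path (continuity being a local property) and again a lifting, defined on $[a,c^{*})$ with $c^{*}\leqslant b$ the supremum of the corresponding endpoints. Hence, once $\mathcal P\neq\varnothing$, Zorn's lemma furnishes a maximal element $(c,\alpha)$; conditions $(1)$ and $(2)$ hold by definition, and condition $(3)$ is precisely the maximality of $(c,\alpha)$ in $\mathcal P$, since a strict extension to some $[a,c')$ with $c<c'\leqslant b$ would be a strictly larger element of $\mathcal P$. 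Therefore the whole proof collapses to showing that $\mathcal P\neq\varnothing$, i.e.\ to the \emph{local lifting property}: $\beta$ admits a lifting on some $[a,c_{0})$ with $c_{0}>a$ and $\alpha(a)=x$.

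To establish local lifting I would use normal neighborhoods. Since $f$ is discrete and open, the point $x$ has arbitrarily small normal neighborhoods $U=U(x,f,\rho)$, defined as the $x$-component of $f^{-1}(B(f(x),\rho))$, for which $\overline U$ is a compact subset of $D$, $f^{-1}(f(x))\cap\overline U=\{x\}$, $f(U)=B(f(x),\rho)$, $f(\partial U)=S(f(x),\rho)$, and $f|_{U}\colon U\to B(f(x),\rho)$ is proper, open, discrete and surjective. Fix such a $\rho$. By continuity of $\beta$ and $\beta(a)=f(x)$ there is $c_{0}>a$ with $\beta([a,c_{0}))\subset B(f(x),\rho)=f(U)$, and it remains to lift $\beta|_{[a,c_{0})}$ into $U$. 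Running the maximality argument again inside $U$ produces a lift on a maximal subinterval; properness of $f|_{U}$ confines the lift to the compactum $\overline U$, so if the subinterval were a proper one $[a,c_{1})$ the lift would accumulate at some $z'\in\overline U$ with $f(z')=\beta(c_{1})\in B(f(x),\rho)$. Since $f(\partial U)\subset S(f(x),\rho)$, this interior value forces $z'\in U$, and a local lifting step at $z'$ would then extend the lift, a contradiction; hence the lift reaches $c_{0}$.

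The main obstacle is exactly this local lifting step at an interior point $z'$, that is, the path-lifting property of a proper, open, discrete surjection onto a ball. Away from the branch set $B_{f}$ the map $f$ is a local homeomorphism, so short lifts exist and are unique, and the difficulty is to carry the lift across values in $f(B_{f})$. I expect this to be resolved by the foundational fact that, for $n\geqslant 2$, both $B_{f}$ and $f(B_{f})$ are closed of topological dimension at most $n-2$ and hence do not locally separate, which together with properness permits the lift to be pushed through branch values and up to the endpoint. This is precisely the content of the structure theory of discrete open mappings developed in \cite[Chapter~II]{Ri}, on which Proposition~\ref{pr3_a} rests, and I would invoke it here rather than reprove it.
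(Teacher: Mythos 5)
First, a point of comparison: the paper does not prove Proposition~\ref{pr3_a} at all --- it is imported verbatim from Rickman's monograph with the pointer \cite[Corollary~II.3.3]{Ri} --- so your outline is being measured against a bare citation rather than an in-paper argument. Your outer reductions are correct and cleanly done: the time-reversal trick disposes of the second assertion; the Zorn's lemma argument on the poset of partial liftings is sound (the union of a chain is again a lifting, and maximality in that poset is literally condition $(3)$ of the definition of a maximal lifting), so the whole proposition is equivalent to producing \emph{one} lifting on a nondegenerate interval $[a,c_0)$; and your endpoint analysis inside a normal neighborhood $U$ is essentially right, provided you add the missing half-step that the cluster set of the lift as $t\to c_1$ is a nonempty compact \emph{connected} subset of $f^{-1}(\beta(c_1))\cap\overline{U}$, hence by discreteness a single point $z'\in U$, so that the lift genuinely converges and can be continued from $z'$.

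The difficulty is that the one step you defer is the entire theorem. Starting the lift at $x$, or continuing it at $z'$, is the same unproven local lifting property at a possibly branching point --- your normal-neighborhood paragraph does not reduce this difficulty but relocates it, as you yourself acknowledge --- and the justification you offer (that $B_f$ and $f(B_f)$ are closed, of topological dimension at most $n-2$, and do not locally separate, so that properness lets the lift be ``pushed through'' branch values) is a heuristic, not an argument; making it rigorous (covering-space lifting off the branch set, approximation of $\beta$ by paths missing $f(B_f)$, a limit argument controlled by properness and the local index) is precisely the content of the path-lifting theorem in \cite[Section~II.3]{Ri}, from which the cited Corollary~II.3.3 follows at once. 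So, read as a self-contained proof, your proposal has a genuine gap; read as a proof-by-citation, its irreducible core is exactly the reference the paper itself makes. The scaffolding you built does have value --- it isolates the local statement as the only fact that must be imported --- but if one is going to invoke \cite[Chapter~II]{Ri} in any case, citing Corollary~II.3.3 directly, as the paper does, is the cleaner course.
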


\medskip A path $\alpha: [a,\,b)\rightarrow D$ is called a {\it total
$f$-lifting} of $\beta$ starting at $x,$ if $(1)\quad
\alpha(a)=x\,;$ $(2)\quad (f\circ\alpha)(t)=\beta(t)$ for any $t\in
[a, b).$ In the case when the mapping $f$ is also closed, we have a
strengthened version of Proposition~\ref{pr3_a} (see, for example,
\cite[Lemma~3.7]{Vu}).

\medskip
\begin{proposition}\label{pr4_a}
Let $f:D \rightarrow {\Bbb R}^n$ be a discrete open and closed
mapping, $\beta: [a,\,b)\rightarrow f(D)$ be a path, and
$x\in\,f^{-1}\left(\beta(a)\right).$ Then $\beta$ has a total
$f$-lif\-ting starting at $x.$
\end{proposition}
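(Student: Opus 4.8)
The plan is to start from the maximal $f$-lifting provided by Proposition~\ref{pr3_a} and to show that, under the additional hypothesis that $f$ is closed, this maximal lifting is in fact defined on the whole interval $[a,b).$ First I would apply Proposition~\ref{pr3_a} to obtain a maximal $f$-lifting $\alpha\colon[a,c)\rightarrow D$ of $\beta$ starting at $x,$ with $a<c\leqslant b.$ If $c=b$ there is nothing to prove, since then $\alpha$ is already a total $f$-lifting. So I would argue by contradiction and assume $c<b.$ Then $y_0:=\beta(c)$ is a well-defined point of $f(D),$ and since $f$ maps $D$ onto $D^{\,\prime}$ we have $y_0\in D^{\,\prime}.$ The goal is to show that $\alpha(t)$ converges, as $t\rightarrow c-0,$ to a point of $D$ lying over $y_0,$ which would allow the lifting to be extended beyond $c$ and thereby contradict maximality.

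The key step is to prevent $\alpha$ from escaping to $\partial D$ as $t\rightarrow c-0,$ and this is exactly where closedness enters. The fibre $f^{\,-1}(y_0)$ is a discrete subset of $D,$ because $f$ is discrete; around each of its points $z_i$ I would choose a normal neighborhood $U_i=U(z_i,f,s_i)$ of $f$ (the $z_i$-component of $f^{\,-1}(B(y_0,s_i))$), taken small enough that the $U_i$ are pairwise disjoint, that $\overline{U_i}$ is a compact subset of $D,$ and that $\overline{U_i}\cap f^{\,-1}(y_0)=\{z_i\};$ the existence of such neighborhoods for discrete open mappings is standard (see~\cite{Ri}). Put $U=\bigcup_i U_i,$ which is an open subset of $D$ containing $f^{\,-1}(y_0).$ Now I would use the topological characterization of closed mappings: since $f$ is closed and $U$ is an open neighborhood of $f^{\,-1}(y_0),$ there is an open set $V\ni y_0$ in $D^{\,\prime}$ with $f^{\,-1}(V)\subseteq U.$ Because $\beta$ is continuous and $\beta(c)=y_0,$ there is $\delta>0$ with $\beta(t)\in V$ for all $t\in(c-\delta,c),$ whence $\alpha(t)\in f^{\,-1}(V)\subseteq U$ for such $t.$ As $\alpha((c-\delta,c))$ is connected and the $U_i$ are disjoint open sets, this image must lie in a single $U_{i_0}.$

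It remains to extract the limit and reach the contradiction. Since $\alpha(t)\in U_{i_0}$ for $t\in(c-\delta,c)$ and $\overline{U_{i_0}}$ is compact in $D,$ the cluster set of $\alpha$ at $c$ is a nonempty subset of $\overline{U_{i_0}}\subseteq D.$ Every cluster point $z^{*}$ satisfies $f(z^{*})=\lim_{t\rightarrow c-0}\beta(t)=y_0$ by continuity of $f,$ so $z^{*}\in\overline{U_{i_0}}\cap f^{\,-1}(y_0)=\{z_{i_0}\};$ hence $\alpha(t)\rightarrow z_{i_0}\in D$ as $t\rightarrow c-0.$ Setting $\alpha(c):=z_{i_0}$ extends $\alpha$ continuously to $[a,c]$ with $f\circ\alpha=\beta$ there, and applying Proposition~\ref{pr3_a} at $z_{i_0}$ to the remaining path $\beta|_{[c,b)}$ produces a lifting strictly beyond $c,$ contradicting the maximality of $\alpha.$ Therefore $c=b,$ and $\alpha$ is the desired total lifting. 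I expect the main obstacle to be the second paragraph, namely justifying rigorously that closedness confines $\alpha$ to one relatively compact normal neighborhood. The delicate points are that $f^{\,-1}(y_0)$ may be infinite — so the $U_i$ must be chosen with individually shrinking radii, and $\overline{U}$ itself need not be compact, only $\overline{U_{i_0}}$ — and that one must combine the characterization of closed maps with the structure theory of normal neighborhoods of discrete open mappings.
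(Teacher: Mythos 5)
Your argument is correct, but note that the paper does not actually prove Proposition~\ref{pr4_a}: it quotes it from Vuorinen (see \cite[Lemma~3.7]{Vu}), so the comparison is with the classical argument rather than with a proof in the text. The classical route is shorter at the final step: by the theory of maximal liftings for discrete open mappings (see \cite[Chapter~II]{Ri}), if the maximal lifting $\alpha:[a,c)\rightarrow D$ given by Proposition~\ref{pr3_a} has $c<b$, then $\alpha(t)$ can accumulate only on $\partial D$ as $t\rightarrow c-0$; choosing $t_k\rightarrow c$ with $\alpha(t_k)\rightarrow\omega\in\partial D$ and using that closed mappings preserve the boundary of a domain (\cite[Theorem~3.3]{Vu} --- a fact the paper itself invokes elsewhere), one gets $\beta(c)=\lim\limits_{k\rightarrow\infty} f(\alpha(t_k))\in\partial f(D)$, contradicting $\beta(c)\in f(D)$, which is open since $f$ is open. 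You instead avoid citing the boundary-escape property of maximal liftings and re-derive the needed convergence from scratch: the characterization of closed maps (an open $V\ni y_0$ with $f^{-1}(V)\subset U$ for any open $U\supset f^{-1}(y_0)$, obtained as $V=f(D)\setminus f(D\setminus U)$), the pairwise disjoint relatively compact neighborhoods of the fibre points, and the connectedness of $\alpha((c-\delta,c))$ trap the tail of $\alpha$ in a single $\overline{U_{i_0}}$, force the cluster set at $c$ to equal $\{z_{i_0}\}$, and let you extend past $c$ via Proposition~\ref{pr3_a} applied to $\beta|_{[c,b)}$, contradicting condition $(3)$ of maximality. All steps check out: $f^{-1}(y_0)$ is closed and discrete in $D$, so the shrinking disjoint neighborhoods you describe exist (in fact full normal neighborhoods are not needed --- small balls with $\overline{B(z_i,r_i)}\subset D$ and $\overline{B(z_i,r_i)}\cap f^{-1}(y_0)=\{z_i\}$ suffice, since you never use the relation $f(\partial U_i)=\partial f(U_i)$); the set $V$ is open in ${\Bbb R}^n$ because $f(D)$ is; and a path eventually contained in a compactum whose cluster set is a single point converges to it. Your version is longer but self-contained modulo Proposition~\ref{pr3_a}, whereas the classical version leans on two additional quoted facts; what it buys is independence from the boundary-preservation theorem for closed mappings.
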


\medskip
{\it Proof of Theorem~\ref{th3}.} We carry out the proof according
to a scheme similar to the proof of Theorem~1 in~\cite{Sev$_2$}. Fix
$x_0\in\partial D.$ It is necessary to show the possibility of
continuous extension of the mapping $f$ to the point $x_0.$  Using,
if necessary, the transformation $\varphi:\infty\mapsto 0$ and
taking into account the invariance of the modulus $M$ in the left
part of the relation~$\varphi:\infty\mapsto 0$
(see~\cite[Theorem~8.1]{Va}), we may assume that $x_0 \ne \infty.$

\medskip
Assume that the conclusion about the continuous extension of the
mapping $f$ to the point $x_0$ is not correct. Then any prime end
$P_0\in E_{D^{\,\prime}}$ is not a limit of $f$ at $x_0,$ in other
words, there is a sequence $x_k,$ $k=1,2,\ldots,$ $x_k\rightarrow
x_0$ as $k\rightarrow\infty$ and a number $\varepsilon_0>0$ such
that $\rho(f(x_k), P_0)\geqslant \varepsilon_0$ for any $k\in {\Bbb
N},$ where $\rho$ is one of the metrics in~(\ref{eq5}). Since
$D^{\,\prime}$ is a  regular domain by the assumption, it may be
mapped on some bounded domain $D_*$ with a locally quasiconformal
boundary using some a mapping $h: D^{\,\prime}\rightarrow D_*.$ Note
that, there is a one-to-one correspondence between boundary points
and prime ends of domains with locally quasiconformal boundaries
(see, e.g., \cite[Theorem~2.1]{IS$_2$};
cf.~\cite[Theorem~4.1]{Na$_2$}). Since $\overline{D}_*$ is a
compactum in ${\Bbb R}^n,$ we conclude from the above that a metric
space $(\overline{D^{\,\prime}}_P, \rho)$ is compact. Thus, we may
assume that $f(x_k)$ converges to some element $P_1\ne P_0,$
$P_1\in\overline{D^{\,\prime}}_P$ as $k\rightarrow\infty.$ Since, by
the assumption, $f$ has no a limit at $x_0,$ there is at least one a
sequence $y_k\rightarrow x_0$ as $k\rightarrow\infty$ such that
$\rho(f(y_k), P_1)\geqslant \varepsilon_1$ for any $k\in {\Bbb N}$
and some $\varepsilon_1>0.$ Again, sine the metric space
$(\overline{D^{\,\prime}}_P, \rho)$ is compact, we may assume that
$f(y_k)\rightarrow P_2$ as $k\rightarrow \infty,$ $P_1\ne P_2,$
$P_2\in \overline{D^{\,\prime}}_P.$ Since $f$ is closed, it
preserves the boundary of a domain, see~\cite[Theorem~3.3]{Vu}.
Thus, $P_1, P_2\in E_{D^{\,\prime}}.$

\medskip
Let $\sigma_m$ and let $\sigma^{\,\prime}_m,$ $m=0,1,2,\ldots, $ be
a sequence of cuts corresponding to prime ends $P_1$ and $P_2,$
respectively. Let also cuts $\sigma_m,$ $m=0,1,2,\ldots, $ lie on
spheres $S(z_0, r_m)$ centered at a point $z_0\in
\partial D^{\,\prime},$ where $r_m\rightarrow 0$ as $m\rightarrow\infty$
(such a sequence $\sigma_m$ exists by~\cite[Lemma~3.1]{IS$_2$},
cf.~\cite[Lemma~1]{KR$_2$}). We may assume that $r_0<r_*=r_*(z_0),$
where $r_*$ is the number from conditions of the theorem.  Let $d_m$
and $g_m,$ $m=0,1,2,\ldots, $ be sequences of domains in
$D^{\,\prime}$ corresponding to cuts $\sigma_m$ and
$\sigma^{\,\prime}_m,$ respectively. Since
$(\overline{D^{\,\prime}}_P, \rho)$ is a metric space, we may
consider that $d_m$ and $g_m$ disjoint for any $m=0,1,2,\ldots ,$ in
particular,
\begin{equation}\label{eq4}
d_0\cap g_0=\varnothing\,.
\end{equation}
Since $f(x_k)$ converges to $P_1$ as $k\rightarrow\infty,$ for any
$m\in {\Bbb N}$ there is $k=k(m)$ such that $f(x_k)\in d_m$ for
$k\geqslant k=k(m).$ By renumbering the sequence $x_k$ if necessary,
we may assume that $f(x_k)\in d_k$ for any natural $k.$ Similarly,
we may assume that $f(y_k)\in g_k$ for any $k\in {\Bbb N}.$ Fix
$f(x_1)$ and $f(y_1).$ Since, by the definition of a prime end,
$\bigcap\limits_{k=1}^{\infty}d_k=\bigcap\limits_{l=1}^{\infty}g_l=\varnothing,$
there are numbers $k_1$ and $k_2\in {\Bbb N}$ such that
$f(x_1)\not\in d_{k_1}$ and $f(y_1)\not \in g_{k_2}.$ Since, by the
definition, $d_k\subset d_{k_0}$ for any $k\geqslant k_1$ and
$g_k\subset g_{k_2}$ for $k\geqslant k_2,$ we obtain that
\begin{equation}\label{eq3}
f(x_1)\not\in d_k\,,\quad f(y_1)\not\in g_k\,, \quad
k\geqslant\max\{k_1, k_2\}\,.
\end{equation}
Let $\gamma_k$ be a path joining $f(x_1)$ and $f(x_k)$ in $d_1,$ and
let $\gamma^{\,\prime}_k$ be a path joining $f(y_1)$ and $f(y_k)$ in
$g_1.$ Let also $\alpha_k$ and $\beta_k$ be total $f$-liftings of
$\gamma_k$ and $\gamma^{\,\prime}_k$ in $D$ starting at $x_k$ and
$y_k,$ respectively (such liftings exist by
Proposition~\ref{pr4_a}).
Note that the points $f(x_1)$ and $f(y_1)$ may have no more than a
finite number of pre-images under the mapping $f$ in the domain $D,$
see~\cite[Lemma~3.2]{Vu}. Then there exists $R_0>0$ such that
$\alpha_k(1), \beta_k(1)\in D\setminus B(x_0, R_0)$ for any
$k=1,2,\ldots .$ Since the boundary of $D$ is weakly flat, for any
$P>0$ there is $i=i_P\geqslant 1$ such that
\begin{equation}\label{eq7}
M(\Gamma(|\alpha_k|, |\beta_k|, D))>P\qquad\forall\,\,k\geqslant
k_P\,.
\end{equation}
Let us to show that, the condition~(\ref{eq7}) contradicts the
definition of~$f$ in~(\ref{eq2*A}). Indeed, let $\gamma\in
\Gamma(|\alpha_k|, |\beta_k|, D).$ Then $\gamma:[0, 1]\rightarrow
D,$ $\gamma(0)\in |\alpha_k|$ and $\gamma(1)\in |\beta_k|.$ In
particular, $f(\gamma(0))\in |\gamma_k|$ and $f(\gamma(1))\in
|\gamma^{\,\prime}_k|.$ In this case, it follows from the
relations~(\ref{eq4}) and~(\ref{eq7}) that $|f(\gamma)|\cap
d_1\ne\varnothing \ne |f(\gamma)|\cap(D^{\,\prime}\setminus d_1)$
for $k\geqslant\max\{k_1, k_2\}.$ By~\cite[Theorem~1.I.5.46]{Ku}
$|f(\gamma)|\cap
\partial d_1\ne\varnothing,$ in other words, $|f(\gamma)|\cap S(z_0,
r_1)\ne\varnothing,$ because $\partial d_1\cap D^{\,\prime}\subset
\sigma_1\subset S(z_0, r_1)$ by the definition of a cut $\sigma_1.$
Let $t_1\in (0,1)$ be such that $f(\gamma(t_1))\in S(z_0, r_1)$ and
$f(\gamma)|_1:=f(\gamma)|_{[t_1, 1]}.$ Without loss of generality,
we may assume that $f(\gamma)|_1\subset {\Bbb R}^n\setminus B(z_0,
r_1).$ Arguing similarly for a path $f(\gamma)|_1,$ we may find a
point $t_2\in (t_1,1)$ such that $f(\gamma(t_2))\in S(z_0, r_0).$
Put $f(\gamma)|_2:=f(\gamma)|_{[t_1, t_2]}.$ Then $f(\gamma)|_2$ is
a subpath of $f(\gamma)$ and, in addition, $f(\gamma)|_2\in
\Gamma(S(z_0, r_1), S(z_0, r_0), D^{\,\prime}).$ Without loss of
generality, we may assume that $f(\gamma)|_2\subset B(z_0, r_0).$
Therefore, $\Gamma(|\alpha_k|, |\beta_k|, D)>\Gamma_f(z_0, r_1,
r_0).$ From the latter relation, due to the minority of the modulus
of families of paths (see e.g. \cite[Theorem~1(c)]{Fu}) we obtain
that
\begin{equation}\label{eq5C}
M(\Gamma(|\alpha_k|, |\beta_k|, D))\leqslant M(\Gamma_f(z_0, r_1,
r_0))\,.
\end{equation}
Combining~(\ref{eq5C}) with~(\ref{eq2*A}), we obtain that
\begin{equation}\label{eq11}
M(\Gamma(|\alpha_k|, |\beta_k|, D))\leqslant
\int\limits_{A(y_0,r_1,r_0)\cap f(D)} Q(y)\cdot \eta^{n}(|y-y_0|)\,
dm(y)\,,
\end{equation}
where $\eta: (r_1,r_2)\rightarrow [0,\infty ]$ is any Lebesgue
measurable function with $\int\limits_{r_1}^{r_0}\eta(r)\,
dr\geqslant 1.$

\medskip
Below we use the following conventions: $a/\infty=0$ for
$a\ne\infty,$ $a/0=\infty$ for $a>0$ and $0\cdot\infty=0$ (see,
e.g., \cite[3.I]{Sa}). Put
\begin{equation}\label{eq13}
I=\int\limits_{r_1}^{r_0}\frac{dt}{tq_{z_0}^{1/(n-1)}(t)}\,.
\end{equation}
By the assumption, there is a set $E\subset [r_1, r_0]$ of a
positive measure such that $q_{z_0}(t)$ is finite for all $t\in E.$
In this case, a function
$\eta_0(t)=\frac{1}{Itq_{z_0}^{1/(n-1)}(t)}$ satisfies the
relation~(\ref{eqA2}). Subsisting this function in the right-hand
part of~(\ref{eq11}) and using the Fubini theorem, we obtain that
\begin{equation}\label{eq14}
M(\Gamma(|\widetilde{\alpha_i}|, |\widetilde{\beta_i}|, D))
\leqslant \frac{\omega_{n-1}}{I^{n-1}}<\infty\,.
\end{equation}
The relation~(\ref{eq14}) contradicts with~(\ref{eq7}). The
contradiction obtained above disproves the assumption on the absence
of a continuous extension of the mapping $f$ to the boundary of the
domain $D.$ The proof of the equality
$\overline{f}(\overline{D})=\overline{D^{\,\prime}}$ is similar to
the second part of the proof of Theorem~3.1 in~\cite{SSD}.~$\Box$

\medskip
\begin{remark}\label{rem1}
The statement of Theorem~\ref{th3} remains true, if in its
formulation instead of the specified conditions on function $Q$ to
require that $Q\in L_{\rm loc}^1({\Bbb R}^n),$ $Q(y)\equiv 0$ for
$y\in {\Bbb R}^n\setminus f(D).$  Indeed,
by~\cite[Theorem~III.8.1]{Sa}, for any point $y_0\in{\Bbb R}^n$
\begin{equation}\label{eq7F}
\int\limits_{\varepsilon_1}^{\varepsilon_2}\int\limits_{S(y_0,
r)}Q(y)\,d\mathcal{A}\,dr\,=\int\limits_{\varepsilon_1<|y-
y_0|<\varepsilon_2}Q(y)\,dm(y)
\end{equation}
for any $0\leqslant \varepsilon_1<\varepsilon_2.$ By~(\ref{eq7F}),
it follows that $q_{y_0}(r)<\infty$ for
$\varepsilon_1<r<\varepsilon_2.$
\end{remark}

\medskip
\begin{remark}\label{rem2}
The statement of Theorem~\ref{th3} remains true, if in its
formulation instead of the specified conditions on function $Q$ to
require that, for any $y_0\in \partial D^{\,\prime}$ there is
$\delta(y_0)>0$ such that
\begin{equation}\label{eq5**}
\int\limits_{\varepsilon}^{\delta(y_0)}
\frac{dt}{tq_{y_0}^{\frac{1}{n-1}}(t)}<\infty, \qquad
\int\limits_{0}^{\delta(y_0)}
\frac{dt}{tq_{y_0}^{\frac{1}{n-1}}(t)}=\infty
\end{equation}
for sufficiently small $\varepsilon>0.$ This statement may be proved
by the choosing of the admissible function $\eta$ in~(\ref{eq11})
and by the using the fact that the second condition in~(\ref{eq5**})
is possible only if the inequality $q_{y_0}(t)<\infty$ holds for
some set $E\subset [\varepsilon, \delta(y_0)]$ of a positive linear
measure.
\end{remark}

\medskip
\begin{remark}\label{rem3}
The statement of Theorem~\ref{th3} remains true, if in its
formulation instead of the specified conditions on function $Q$ to
require that, for any $y_0\in \partial D^{\,\prime}$ there is
$\varepsilon_0=\varepsilon_0(y_0)>0$ and a Lebesgue measurable
function $\psi:(0, \varepsilon_0)\rightarrow [0, \infty]$ such that
\begin{equation}\label{eq7B} I(\varepsilon,
\varepsilon_0):=\int\limits_{\varepsilon}^{\varepsilon_0}\psi(t)\,dt
< \infty\quad \forall\,\,\varepsilon\in (0, \varepsilon_0)\,,\quad
I(\varepsilon, \varepsilon_0)>0\quad
\text{as}\quad\varepsilon\rightarrow 0\,,
\end{equation}
and, in addition,
\begin{equation} \label{eq7C}
\int\limits_{A(y_0, \varepsilon, \varepsilon_0)}
Q(x)\cdot\psi^{\,n}(|y-y_0|)\,dm(y)\leqslant C_0I^n(\varepsilon,
\varepsilon_0)\,,\end{equation}
as $\varepsilon\rightarrow 0,$ where $C_0$ is some constant, and
$A(y_0, \varepsilon, \varepsilon_0)$ is defined in~(\ref{eq1**}).

\medskip
Indeed, literally repeating the proof of the statement given in
Theorem~\ref{th3} to the ratio~(\ref{eq11}) inclusive, we put
$$\eta(t)=\left\{
\begin{array}{rr}
\psi(t)/I(r_1, r_0), & t\in (r_1, r_0)\,,\\
0,  &  t\not\in (r_1, r_0)\,,
\end{array}
\right. $$
where $I(r_1, \varepsilon_0)=\int\limits_{r_1}^{\varepsilon_0}\,\psi
(t)\, dt.$ Observe that
$\int\limits_{r_1}^{\varepsilon_0}\eta(t)\,dt=1.$ Now, by the
definition of $f$ in~(\ref{eq2*A}) and due to the
relation~(\ref{eq11}) we obtain that
\begin{equation}\label{eq14C}
M(\Gamma(|\widetilde{\alpha_i}|, |\widetilde{\beta_i}|, D))\leqslant
C_0<\infty\,.
\end{equation}
The relation~(\ref{eq14C}) contradicts with~(\ref{eq7}). The
resulting contradiction proves the desired statement.~$\Box$
\end{remark}

\section{On the discreteness of mappings with
the inverse Poletsky inequality at the boundary of a domain}

In~\cite{Vu}, some issues related to the discreteness of a closed
quasiregular map $f:{\Bbb B}^n\rightarrow {\Bbb R}^n$ in
$\overline{{\Bbb B}^n}$ are considered, see~\cite[Lemma~4.4,
Corollary~4.5 and Theorem~4.7]{Vu}. In this section we talk about
the discreteness of mappings that satisfy the
condition~(\ref{eq2*A}). Among other things, we note that we are
primarily interested here in the case when the mapped domain has a
bad boundary.

\medskip
We will say that {\it $f$ satisfies the inverse Poletsky inequality}
at a point $y_0\in\overline{f(D)}\setminus \{\infty\}$ relative to
$p$-modulus, if the relation

\begin{equation}\label{eq2*B}
M_p(\Gamma_f(y_0, r_1, r_2))\leqslant
\int\limits_{A(y_0,r_1,r_2)\cap f(D)} Q(y)\cdot \eta^p (|y-y_0|)\,
dm(y)
\end{equation}
holds for any Lebesgue measurable function $\eta:
(r_1,r_2)\rightarrow [0,\infty ]$ such that
\begin{equation}\label{eqB2}
\int\limits_{r_1}^{r_2}\eta(r)\, dr\geqslant 1\,.
\end{equation}
Using the inversion $\psi(y)=\frac{y}{|y|^2},$ we also may defined
the relation~(\ref{eq2*B}) at the point $y_0=\infty.$

Following~\cite[Section~2.4]{NP}, we say that a domain $D\subset
{\Bbb R}^n,$ $n\geqslant 2,$ is {\it uniform with respect to
$p$-modulus}, if for any $r>0$ there is $\delta>0$ such that the
inequality
\begin{equation}\label{eq17***}
M_p(\Gamma(F^{\,*},F, D))\geqslant \delta
\end{equation}
holds for any continua $F, F^*\subset D$ with $h(F)\geqslant r$ and
$h(F^{\,*})\geqslant r.$ When $p=n,$ the prefix ''relative to $p
$-modulus'' is omitted. Note that this is the definition slightly
different from the ''classical'' given in \cite[Chapter~2.4]{NP},
where the sets $F$ and $F^*\subset D $ are assumed to be arbitrary
connected. We prove the following statement (see its analogue for
quasiregular mappings of the unit ball in~\cite[Lemma~4.4]{Vu}).

\medskip
\begin{lemma}\label{lem1A}
{\sl\, Let $n\geqslant 2,$ $n-1<p\leqslant n,$ let $D$ be a domain
which is uniform with respect to $p$-modulus, and let $D^{\,\prime}$
be a regular domain. Let $f:D\rightarrow {\Bbb R}^n$ be an open
discrete and closed mapping in $D,$ for which there is a Lebesgue
measurable function $Q:{\Bbb R}^n\rightarrow [0, \infty],$ equals to
zero outside of $D^{\,\prime},$ such that the
relations~(\ref{eq2*B})--(\ref{eqB2}) hold for any $y_0\in
\partial D^{\,\prime}.$ Assume that, for any $y_0\in
\partial D^{\,\prime}$ there is $\varepsilon_0=\varepsilon_0(y_0)>0$ and a Lebesgue measurable
function $\psi:(0, \varepsilon_0)\rightarrow [0,\infty]$ such that
\begin{equation}\label{eq7***} I(\varepsilon,
\varepsilon_0):=\int\limits_{\varepsilon}^{\varepsilon_0}\psi(t)\,dt
< \infty\quad \forall\,\,\varepsilon\in (0, \varepsilon_0)\,,\quad
I(\varepsilon, \varepsilon_0)\rightarrow
\infty\quad\text{as}\quad\varepsilon\rightarrow 0\,,
\end{equation}
and, in addition,
\begin{equation} \label{eq3.7.2}
\int\limits_{A(y_0, \varepsilon, \varepsilon_0)}
Q(y)\cdot\psi^{\,p}(|y-y_0|)\,dm(y) = o(I^p(\varepsilon,
\varepsilon_0))\,,\end{equation}
as $\varepsilon\rightarrow 0,$ where $A(y_0, \varepsilon,
\varepsilon_0)$ is defined in~(\ref{eq1**}). Let $C_j,$
$j=1,2,\ldots ,$ be a sequence of continua such that
$h(C_j)\geqslant \delta>0$ for some $\delta>0$ and any $j\in {\Bbb
N}$ and, in addition, $\rho(f(C_j))\rightarrow 0$ as
$j\rightarrow\infty.$ Then there is $\delta_1>0$ such that
$\rho(f(C_j), P_0)\geqslant \delta_1>0$ for any $j\in {\Bbb N}$ and
for any $P_0\in E_{D^{\,\prime}},$ where the metrics $\rho$ is
defined in~(\ref{eq5}).

Here, as usually, $$\rho(A)=\sup\limits_{x, y\in A}\rho(x, y)\,,$$
$$\rho(A, B)=\inf\limits_{x\in A, y\in B}\rho(x, y)\,.$$
}
\end{lemma}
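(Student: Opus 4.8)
The plan is to argue by contradiction, playing the uniformity of $D$ with respect to $p$-modulus against the smallness of the modulus forced by the inverse Poletsky inequality~(\ref{eq2*B}) once the images $f(C_j)$ are localized near a single boundary prime end. Suppose the assertion fails. Then for each $k\in{\Bbb N}$ I can choose an index $j_k$ and a prime end $P_k\in E_{D^{\,\prime}}$ with $\rho(f(C_{j_k}),P_k)<1/k.$ For each fixed $j$ the set $f(C_j)$ is a compact subset of the open domain $D^{\,\prime},$ so $g^{\,-1}(f(C_j))$ is a compactum in the interior of $D_0$ and hence $\rho(f(C_j),E_{D^{\,\prime}})>0.$ Consequently the indices $j_k$ cannot stay bounded, so after passing to a subsequence and relabelling I obtain continua $C_j$ with $h(C_j)\geqslant\delta,$ $\rho(f(C_j))\to 0,$ and prime ends $P_j$ with $\rho(f(C_j),P_j)\to 0.$ Using the compactness of $(\overline{D^{\,\prime}}_P,\rho)$ established in the proof of Theorem~\ref{th3}, I may assume $P_j\to P_0;$ since $g^{\,-1}$ is a homeomorphism of $\overline{D^{\,\prime}}_P$ onto $\overline{D_0}$ carrying each $P_j$ to a boundary point of $D_0,$ the limit $P_0$ again lies in $E_{D^{\,\prime}}.$ Combining $\rho(f(C_j),P_j)\to 0,$ $\rho(f(C_j))\to 0$ and $P_j\to P_0$ shows that $f(C_j)$ converges to the single prime end $P_0.$

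Next I would pass to the Euclidean picture near $P_0.$ By the regularity of $D^{\,\prime}$ the prime end $P_0$ is regular, and by~\cite[Lemma~3.1]{IS$_2$} it admits a chain of cuts $\sigma_m\subset S(z_0,r_m)$ centered at some $z_0\in\partial D^{\,\prime}$ with $r_m\to 0,$ whose associated domains $d_m$ satisfy $d_m\subset B(z_0,r_m);$ I may take $r_0\leqslant\varepsilon_0(z_0).$ Because $f(C_j)$ converges to $P_0,$ for every $m$ there is $J(m)$ such that $f(C_j)\subset d_m\subset B(z_0,r_m)$ whenever $j\geqslant J(m).$ This geometric localization is the step I expect to be the main obstacle: one must convert the abstract convergence in the prime-end metric $\rho$ into the concrete inclusion $f(C_j)\subset B(z_0,r_m),$ and this is exactly what the regularity of $P_0$ and the sphere-supported chain of~\cite[Lemma~3.1]{IS$_2$} are there to provide.

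I would then fix once and for all a nondegenerate continuum $F\subset D$ whose image avoids a neighbourhood of $z_0.$ Since $f$ maps $D$ onto the bounded domain $D^{\,\prime}$ and $z_0\in\partial D^{\,\prime},$ I pick $w_0\in D^{\,\prime}\setminus\overline{B(z_0,r_0)},$ a point $x_0\in f^{\,-1}(w_0),$ and a small continuum $F\ni x_0$ with $f(F)\subset D^{\,\prime}\setminus\overline{B(z_0,r_0)};$ set $r:=h(F)>0.$ Fix $m.$ For $j\geqslant J(m)$ any $\gamma\in\Gamma(C_j,F,D)$ has image $f(\gamma)$ joining $B(z_0,r_m)$ to ${\Bbb R}^n\setminus B(z_0,r_0),$ so $f(\gamma)$ contains a subpath in $\Gamma(S(z_0,r_m),S(z_0,r_0),A(z_0,r_m,r_0))$ (cf.~\cite[Theorem~1.I.5.46]{Ku}), whence the corresponding subpath of $\gamma$ lies in $\Gamma_f(z_0,r_m,r_0).$ Thus $\Gamma(C_j,F,D)>\Gamma_f(z_0,r_m,r_0),$ and by the minorization property of the modulus (cf.~\cite[Theorem~1(c)]{Fu})
\begin{equation*}
M_p(\Gamma(C_j,F,D))\leqslant M_p(\Gamma_f(z_0,r_m,r_0)).
\end{equation*}

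Finally I apply~(\ref{eq2*B}) at $y_0=z_0$ with the admissible function $\eta(t)=\psi(t)/I(r_m,r_0)$ for $t\in(r_m,r_0)$ and $\eta\equiv 0$ otherwise, which satisfies $\int_{r_m}^{r_0}\eta\,dt=1,$ to obtain
\begin{equation*}
M_p(\Gamma_f(z_0,r_m,r_0))\leqslant\frac{1}{I^{\,p}(r_m,r_0)}\int\limits_{A(z_0,r_m,r_0)}Q(y)\,\psi^{\,p}(|y-z_0|)\,dm(y).
\end{equation*}
Since $r_0\leqslant\varepsilon_0(z_0),$ condition~(\ref{eq7***}) gives $I(r_m,r_0)\to\infty,$ and~(\ref{eq3.7.2}) together with monotonicity of the integral over the smaller annulus makes the right-hand side $o(1)$ as $m\to\infty.$ Hence, given $\sigma>0,$ I first fix $m$ so large that $M_p(\Gamma_f(z_0,r_m,r_0))<\sigma,$ and then for all $j\geqslant J(m)$ I get $M_p(\Gamma(C_j,F,D))<\sigma;$ therefore $M_p(\Gamma(C_j,F,D))\to 0.$ This contradicts the uniformity~(\ref{eq17***}), which forces $M_p(\Gamma(C_j,F,D))\geqslant\delta'>0$ because $h(C_j)\geqslant\delta$ and $h(F)\geqslant r.$ The contradiction proves the lemma.
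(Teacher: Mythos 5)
Your overall strategy is the same as the paper's: argue by contradiction, localize the images $f(C_{j_k})$ inside the chain domains $d_m$ of a single prime end $P_0$ whose cuts lie on spheres $S(z_0,r_m)$, force images of connecting paths across a spherical annulus, and then defeat the uniformity lower bound on $M_p(\Gamma(F,C_{j_k},D))$ by the upper bound coming from~(\ref{eq2*B}) with $\eta=\psi/I$ and~(\ref{eq3.7.2}). Your opening reduction is actually more careful than the paper's: you allow the prime ends $P_k$ to vary, rule out bounded index sequences, and use compactness of $(\overline{D^{\,\prime}}_P,\rho)$ to extract one limit end $P_0$, whereas the paper fixes a single $P_0$ from the start.

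There is, however, a genuine flaw in your localization step: you assert that the spherical chain provided by~\cite[Lemma~3.1]{IS$_2$} has associated domains satisfying $d_m\subset B(z_0,r_m)$. This inclusion is not part of that lemma and is false in general: it would force the impression $I(P_0)=\bigcap\limits_{m}\overline{d_m}\subset\bigcap\limits_{m}\overline{B(z_0,r_m)}=\{z_0\}$ to be a single point, while a regular prime end of a regular domain may well have a nondegenerate impression (for instance, a planar disk slit along a spiral accumulating on an interior circle: the cross-cuts of the narrowing corridor form a regular chain, the domain is conformally equivalent to a disk, yet the impression of that end is the whole circle, so no spherical chain for it can have $d_m$ inside the balls). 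Two of your steps rest on this false inclusion: first, the passage from $f(C_j)\subset d_m$ to $f(C_j)\subset B(z_0,r_m)$; second, the claim that a continuum $F$ chosen only so that $f(F)\subset D^{\,\prime}\setminus\overline{B(z_0,r_0)}$ is separated from $f(C_j)$ by the annulus --- if $d_1$ protrudes outside $B(z_0,r_0)$, then $f(F)$ may meet $d_1$, and a path from $f(F)$ to $f(C_j)$ need not touch either sphere. The repair is exactly the paper's route: since $f(F)$ is a compact subset of $D^{\,\prime}$, the domains $d_m$ are nested with $\overline{d_{m+1}}\cap D^{\,\prime}\subset d_m$ and $\bigcap\limits_m d_m=\varnothing$, one may relabel so that $f(F)\cap d_1=\varnothing$; then the Kuratowski crossing argument (\cite[Theorem~1.I.5.46]{Ku}) applied twice shows that the image of any path from $f(F)$ to $f(C_j)\subset d_m$ meets first $\partial d_1\cap D^{\,\prime}\subset\sigma_1\subset S(z_0,r_1)$ and afterwards $\partial d_m\cap D^{\,\prime}\subset\sigma_m\subset S(z_0,r_m)$, which yields a subpath in $\Gamma(S(z_0,r_m),S(z_0,r_1),A(z_0,r_m,r_1))$ using only $\partial d_m\cap D^{\,\prime}\subset S(z_0,r_m)$, never the ball inclusion. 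With that substitution the remainder of your argument (the admissible function $\eta=\psi/I(r_m,r_1)$, the $o(I^p)$ estimate, and the contradiction with~(\ref{eq17***})) goes through as in the paper.
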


\begin{proof}
Suppose the opposite, namely, let $\rho(f(C_{j_k}), P_0)\rightarrow
0$ as $k\rightarrow\infty$ for some $P_0\in E_{D^{\,\prime}}$ and
for some increasing sequence of numbers $j_k,$ $k=1,2,\ldots .$ Let
$F\subset D$ be any continuum in $D,$ and let $\Gamma_k:=\Gamma(F,
C_{j_k}, D).$ Due to the definition of the uniformity of the domain
with respect to $p$-modulus, we obtain that
\begin{equation}\label{eq3A}
M_p(\Gamma_k)\geqslant \delta_2>0
\end{equation}
for any $k\in {\Bbb N}$ and some $\delta_2>0.$ On the other hand,
let us to consider the family of paths~$f(\Gamma_k).$ Let $d_l,$
$l=1,2,\ldots ,$ be a sequence of domains which corresponds to the
prime end $P_0,$ and let $\sigma_l$ be a cut corresponding to $d_l.$
We may assume that $\sigma_l,$ $l=1,2,\ldots, $ lie on spheres
$S(y_0, r_l)$ centered at some point $y_0\in
\partial D^{\,\prime},$ where $r_l\rightarrow 0$ as $l\rightarrow\infty$
(see~\cite[Lemma~3.1]{IS$_2$}, cf.~\cite[Lemma~1]{KR$_2$}).

\medskip
Let us to prove that, for any $l\in {\Bbb N}$ there is a number
$k=k_l$ such that
\begin{equation}\label{eq3M}
f(C_{j_k})\subset d_l\,,\qquad k\geqslant k_l\,.
\end{equation}
Suppose the opposite. Then there is $l_0\in {\Bbb N}$ such that
\begin{equation}\label{eq3F}
f(C_{j_{m_l}})\cap ({\Bbb R}^n\setminus d_{l_0})\ne\varnothing
\end{equation}
for some increasing sequence of numbers  $m_l,$ $l=1,2,\ldots .$  In
this case, there is a sequence $x_{m_l}\in f(C_{j_{m_l}})\cap ({\Bbb
R}^n\setminus d_{l_0}),$ $l\in {\Bbb N}.$ Since by the assumption
$\rho(f(C_{j_k}), P_0)\rightarrow 0$ for some sequence of numbers
$j_k,$ $k=1,2,\ldots ,$ we obtain that
\begin{equation}\label{eq3E}
\rho(f(C_{j_{m_l}}), P_0)\rightarrow 0\qquad {\text as}\qquad
l\rightarrow\infty\,.
\end{equation}
Since $\rho(f(C_{j_{m_l}}), P_0)=\inf\limits_{y\in
f(C_{j_{m_l}})}h(y, P_0)$ and $f(C_{j_{m_l}})$ is a compact set in
$\overline{D^{\,\prime}}_P$ as a continuous image of the compactum
$C_{j_{m_l}}$ under the mapping $f,$ it follows that
$\rho(f(C_{j_{m_l}}), P_0)=\rho(y_l, P_0),$ where $y_l\in
f(C_{j_{m_l}}).$ Due to the relation~(\ref{eq3E}) we obtain that
$y_l\rightarrow y_0$ as $l\rightarrow\infty$ in the metric $\rho.$
Since by the assumption $\rho(f(C_j))=\sup\limits_{y,z\in
f(C_j)}\rho(y,z)\rightarrow 0$ as $j\rightarrow\infty,$ we have that
$\rho(y_l, x_{m_l})\leqslant \rho(f(C_{j_{m_l}}))\rightarrow 0$ as
$l\rightarrow\infty.$  Now, by the triangle inequality, we obtain
that
$$\rho(x_{m_l}, P_0)\leqslant \rho(x_{m_l}, y_l)+\rho(y_l, P_0)
\rightarrow 0\qquad {\text as}\quad l\rightarrow\infty\,.$$
The latter contradicts with~(\ref{eq3F}). The contradiction obtained
above proves~(\ref{eq3M}).

\medskip
The following considerations are similar to the second part of the
proof of Lemma~2.1 in~\cite{Sev$_1$}. Without loss of generality we
may consider that the number $l_0\in {\Bbb N}$ is such that
$r_l<\varepsilon_0$ for any $l\geqslant l_0,$ and
\begin{equation}\label{eq3I}
f(F)\subset {\Bbb R}^n\setminus d_1\,.
\end{equation}
In this case, we observe that, for $l\geqslant 2$
\begin{equation}\label{eq3G}
f(\Gamma_{k_l})>\Gamma(S(y_0, r_l), S(y_0, r_1), A(y_0, r_l,
r_1))\,.
\end{equation}
Indeed, let $\widetilde{\gamma}\in f(\Gamma_{k_l}).$ Then
$\widetilde{\gamma}(t)=f(\gamma(t)),$ where $\gamma\in
\Gamma_{k_l},$ $\gamma:[0, 1]\rightarrow D,$ $\gamma(0)\in F,$
$\gamma(1)\in C_{j_{k_l}}.$ Due to the relation~(\ref{eq3I}), we
obtain that $f(\gamma(0))\in f(F)\subset {\Bbb R}^n\setminus B(y_0,
\varepsilon_0).$ On the other hand, by~(\ref{eq3M}), $\gamma(1)\in
C_{j_{k_l}}\subset d_l\subset d_1.$ Thus, $|f(\gamma(t))|\cap
d_1)\ne\varnothing \ne |f(\gamma(t))|\cap ({\Bbb R}^n\setminus
d_1).$ Now, by~\cite[Theorem~1.I.5.46]{Ku} we obtain that, there is
$0<t_1<1$ such that $f(\gamma(t_1))\in \partial d_1\cap D\subset
S(y_0, r_1).$ Set $\gamma_1:=\gamma|_{[t_1, 1]}.$ We may consider
that $f(\gamma(t))\in d_1$ for any $t\geqslant t_1.$ Arguing
similarly, we obtain $t_2\in [t_1, 1]$ such that $f(\gamma(t_2))\in
S(y_0, r_l).$ Put $\gamma_2:=\gamma|_{[t_1, t_2]}.$ We may consider
that $f(\gamma(t))\in d_l$ for any $t\in [t_1, t_2].$ Now, a path
$f(\gamma_2)$ is a subpath of $f(\gamma)=\widetilde{\gamma},$ which
belongs to $\Gamma(S(y_0, r_l), S(y_0, r_1), A(y_0,r_l, r_1)).$ The
relation~(\ref{eq3G}) is established.

\medskip
It follows from~(\ref{eq3G}) that
\begin{equation}\label{eq3H}
\Gamma_{k_l}>\Gamma_{f}(S(y_0, r_l), S(y_0, r_1), A(y_0, r_l,
r_1))\,.
\end{equation}
Set
$$\eta_{l}(t)=\left\{
\begin{array}{rr}
\psi(t)/I(r_l, r_1), & t\in (r_l, r_1)\,,\\
0,  &  t\not\in (r_l, r_1)\,,
\end{array}
\right. $$
where $I(r_l, r_1)=\int\limits_{r_l}^{r_1}\,\psi (t)\, dt.$ Observe
that
$\int\limits_{r_l}^{r_1}\eta_{l}(t)\,dt=1.$ Now, by the
relations~(\ref{eq3.7.2}) and~(\ref{eq3H}), and due to the
definition of $f$ in~(\ref{eq2*B}), we obtain that
$$M_p(\Gamma_{k_l})\leqslant M_p(\Gamma_{f}(S(y_0, r_l), S(y_0,
r_1), A(y_0, r_l, r_1)))\leqslant$$
\begin{equation}\label{eq3J}
\leqslant \frac{1}{I^p(r_l, r_1)}\int\limits_{A(y_0, r_l, r_1)}
Q(y)\cdot\psi^{\,p}(|y-y_0|)\,dm(y)\rightarrow 0\quad \text{as}\quad
l\rightarrow\infty\,.
\end{equation}
The relation~(\ref{eq3J}) contradicts with~(\ref{eq3A}). The
contradiction obtained above proves the lemma.~$\Box$
\end{proof}

\medskip
\begin{corollary}
{\sl\, The statement of Lemma~\ref{lem1A} is fulfilled if we put
$D={\Bbb B}^n.$ }
\end{corollary}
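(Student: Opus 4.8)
The corollary is immediate once we verify that the unit ball ${\Bbb B}^n$ is uniform with respect to $p$-modulus in the sense of~(\ref{eq17***}); indeed, all the remaining hypotheses of Lemma~\ref{lem1A} concern only $D^{\,\prime},$ $f,$ $Q$ and $\psi,$ and are carried over verbatim. The plan is therefore to produce, for each $r>0,$ a number $\delta=\delta(r,n,p)>0$ such that $M_p(\Gamma(F^{\,*},F,{\Bbb B}^n))\geqslant\delta$ whenever $F,F^{\,*}\subset{\Bbb B}^n$ are continua with $h(F)\geqslant r$ and $h(F^{\,*})\geqslant r.$

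First I would pass from the chordal to the Euclidean diameter. Since ${\Bbb B}^n$ is bounded, the chordal metric $h$ in~(\ref{eq3C}) and the Euclidean metric are bi-Lipschitz equivalent on $\overline{{\Bbb B}^n},$ so there is $c_0=c_0(n)>0$ with $d(F)\geqslant c_0r$ and $d(F^{\,*})\geqslant c_0r.$ Because both continua lie in the unit ball we also have ${\rm dist}(F,F^{\,*})\leqslant d({\Bbb B}^n)=2,$ and hence the relative distance
$$\Delta(F,F^{\,*}):=\frac{{\rm dist}(F,F^{\,*})}{\min\{d(F),d(F^{\,*})\}}\leqslant\frac{2}{c_0r}$$
is bounded above by a quantity depending only on $r$ and $n.$

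The core step is a lower bound for the $p$-modulus of the family of curves joining two nondegenerate continua of controlled relative distance inside the ball. For $p=n$ this is the classical Loewner property of ${\Bbb R}^n,$ together with the fact that ${\Bbb B}^n$ is a quasiextremal distance (QED) domain, so that restricting the connecting curves to the ball changes the modulus only by a fixed factor. For the full range $n-1<p\leqslant n$ one invokes the corresponding $p$-capacity estimate: a nondegenerate continuum has positive $p$-capacity precisely when $p>n-1,$ and this estimate can be made uniform over all continua of Euclidean diameter at least $c_0r$ contained in a fixed ball and of relative distance at most $2/(c_0r).$ This produces a positive decreasing function $\phi_{n,p}$ with $M_p(\Gamma(F^{\,*},F,{\Bbb B}^n))\geqslant\phi_{n,p}(\Delta(F,F^{\,*})),$ and putting $\delta:=\phi_{n,p}(2/(c_0r))>0$ verifies~(\ref{eq17***}).

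The main obstacle is exactly this last modulus/capacity estimate for $p<n.$ The case $p=n$ is standard, but for $n-1<p<n$ one must appeal to (or reprove) the positivity of the $p$-capacity of continua, which is the precise reason the hypothesis $p>n-1$ is imposed in Lemma~\ref{lem1A}; the boundedness of the domain together with the two-sided control on diameters and on the distance are what make the bound uniform, as demanded by the definition of uniformity with respect to $p$-modulus. With uniformity of ${\Bbb B}^n$ in hand, Lemma~\ref{lem1A} applies directly with $D={\Bbb B}^n,$ which is the assertion of the corollary.~$\Box$
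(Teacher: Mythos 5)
Your reduction is the right one and matches the paper's: the only hypothesis of Lemma~\ref{lem1A} that concerns $D$ is uniformity with respect to $p$-modulus in the sense of~(\ref{eq17***}), and passing between the chordal and Euclidean metrics on the bounded set ${\Bbb B}^n$ is harmless. Your treatment of the conformal case $p=n$ (classical Loewner estimate in ${\Bbb R}^n$ plus the QED property of the ball) is also sound. The gap is in the case $n-1<p<n$, which is the whole point of the stated range. First, positivity of the $p$-capacity of a nondegenerate continuum is a qualitative statement about condensers in ${\Bbb R}^n$: it does not by itself bound from below the $p$-modulus of the family of curves that are \emph{constrained to lie in} ${\Bbb B}^n$ (the reflection/QED reduction you use for $p=n$ exploits conformal quasi-invariance of the $n$-modulus and has no analogue for $p\ne n$), and it gives no uniformity over the class of all pairs $F, F^{\,*}$ with $d(F), d(F^{\,*})\geqslant c_0 r$ --- for that one needs either a quantitative estimate or a compactness/semicontinuity argument, neither of which you supply; asserting that the estimate ``can be made uniform'' is precisely the statement to be proved. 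Second, the intermediate bound you posit, $M_p(\Gamma(F^{\,*},F,{\Bbb B}^n))\geqslant\phi_{n,p}(\Delta(F,F^{\,*}))$, cannot hold for $p<n$: the $p$-modulus is not scale-invariant (it scales like $\lambda^{n-p}$ under $x\mapsto\lambda x$), whereas $\Delta$ is scale-invariant, so two continua of diameter $\varepsilon$ at mutual distance $\varepsilon$ near the origin have $\Delta$ fixed while $M_p(\Gamma(F^{\,*},F,{\Bbb B}^n))\leqslant M_p(\Gamma(F^{\,*},F,{\Bbb R}^n))\rightarrow 0$ as $\varepsilon\rightarrow 0.$ Any correct bound must involve the diameters themselves, not only the relative distance.

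What closes the gap --- and is exactly what the paper does --- is the non-conformal Loewner type estimate of Adamowicz and Shanmugalingam~\cite[Proposition~4.7]{AS}: since ${\Bbb B}^n$ is Ahlfors regular~\cite[Proposition~8.19]{He} and supports a $(1;p)$-Poincar\'e inequality for every $p\geqslant 1$~\cite[Theorem~10.5]{HaK}, one gets the relation~(\ref{eq1H}), namely $M_p(\Gamma(E,F,{\Bbb B}^n))\geqslant \frac{1}{C}\min\{{\rm diam}\,E,{\rm diam}\,F\}$ for all $n-1<p\leqslant n,$ with the curves already restricted to the ball and with the lower bound expressed through the diameters. From this, uniformity in the sense of~(\ref{eq17***}) is immediate after the chordal-to-Euclidean comparison you already carried out. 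So either quote this result (or an equivalent one) for the range $n-1<p<n$, or supply a genuine proof of a uniform in-the-ball estimate; as written, your argument establishes the corollary only for $p=n.$
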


\medskip
\begin{proof}
Obviously, the domain $D={\Bbb B}^n$ is locally connected at its
boundary. We prove that this domain is uniform with respect to the
$p$-modulus for $p\in (n-1, n).$ Indeed, since ${\Bbb B}^n$ is a
Loewner space (see~\cite[Example~8.24(a)]{He}), the set ${\Bbb B}^n$
is Ahlfors regular with respect to the Euclidean metric $d$ and
Lebesgue measure in ${\Bbb R}^n$  (see~\cite[Proposition~8.19]{He}).
In addition, in ${\Bbb B}^n,$ $(1; p)$-Poincar\'{e} inequality holds
for any $p\geqslant 1$ (see e.g.~\cite[Theorem~10.5]{HaK}). Now,
by~\cite[Proposition~4.7]{AS} we obtain that the relation
\begin{equation}\label{eq1H}
M_p(\Gamma(E, F, {\Bbb B}^n))\geqslant \frac{1}{C}\min\{{\rm
diam}\,E, {\rm diam}\,F\}\,,
\end{equation}
holds for any $n-1<p\leqslant n$ and for any continua $E, F\subset
{\Bbb B}^n,$ where $C>0$ is some constant, and ${\rm diam}$ denotes
the Euclidean diameter. Since the Euclidean distance is equivalent
to the chordal distance on bounded sets, the uniformity of the
domain $D={\Bbb B}^n$ with respect to the $p$-modulus follows
directly from~(\ref{eq1H}).~$\Box$
\end{proof}

\medskip
We need the following statement (see~\cite[Theorem~4.2]{Na$_1$}).

\medskip
\begin{proposition}\label{pr1}
{\sl\, Let $\frak{F}$ be a family of connected sets in $D$ such that
$\inf\limits_{F\subset \frak{F}}h(F)>0,$ and let
$\inf\limits_{F\subset \frak{F}}M(\Gamma(F, A, D))>0$ for some
continuum $A\subset D.$ Then
$$\inf\limits_{F, F^{\,*}\subset
\frak{F}}M(\Gamma(F, F^{\,*}, D))>0\,.$$ }
\end{proposition}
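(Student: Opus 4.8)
Proposition 3.1 (the final statement) asserts that for a family $\mathfrak{F}$ of connected sets in $D$ with uniformly positive chordal diameter and with $\inf_{F \in \mathfrak{F}} M(\Gamma(F, A, D)) > 0$ for some fixed continuum $A \subset D$, one has $\inf_{F, F^* \in \mathfrak{F}} M(\Gamma(F, F^*, D)) > 0$. Let me sketch a proof.

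The plan is to obtain the lower bound directly from the definition of the modulus, treating the fixed continuum $A$ as a common ``hub'' through which $F$ and $F^{\,*}$ are linked. Write $a:=\inf_{F\in\frak{F}}M(\Gamma(F,A,D))>0$ and fix two arbitrary sets $F,F^{\,*}\in\frak{F}.$ Let $\varrho$ be an arbitrary admissible function for $\Gamma(F,F^{\,*},D),$ so that $\int_\gamma\varrho\,ds\geqslant 1$ for every $\gamma\in\Gamma(F,F^{\,*},D)$; it suffices to bound $\int_D\varrho^{\,n}\,dm$ from below by a constant independent of $F,F^{\,*}$ and of $\varrho,$ and then to take the infimum over $\varrho.$ The elementary observation driving the argument is this: if $c\,\varrho$ happens to be admissible for $\Gamma(F,A,D)$ for some absolute constant $c,$ then $\int_D\varrho^{\,n}\,dm\geqslant M(\Gamma(F,A,D))/c^{\,n}\geqslant a/c^{\,n},$ and symmetrically with $F^{\,*}$ in place of $F.$ Thus everything reduces to showing that a fixed multiple of $\varrho$ is forced to be admissible for one of the two families $\Gamma(F,A,D)$ or $\Gamma(A,F^{\,*},D).$

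First I would introduce the $\varrho$-distance $d_\varrho(E_1,E_2):=\inf\{\int_\gamma\varrho\,ds:\gamma\in\Gamma(E_1,E_2,D)\}$ and split into cases according to the position of $A.$ If $d_\varrho(F,A)\geqslant 1/4,$ then every path joining $F$ to $A$ has $\varrho$-length at least $1/4,$ so $4\varrho$ is admissible for $\Gamma(F,A,D)$ and we obtain $\int_D\varrho^{\,n}\,dm\geqslant a/4^{\,n}$; the case $d_\varrho(A,F^{\,*})\geqslant 1/4$ is symmetric. It remains to treat the situation in which both $F$ and $F^{\,*}$ are $\varrho$-close to $A.$ Here the connectedness of $A$ together with the admissibility of $\varrho$ for $\Gamma(F,F^{\,*},D)$ forces a definite $\varrho$-spread of $A$: picking near-optimal points $z_0,z_1\in A$ for $d_\varrho(F,A)$ and $d_\varrho(A,F^{\,*}),$ the triangle inequality gives $1\leqslant d_\varrho(F,F^{\,*})\leqslant d_\varrho(F,A)+d_\varrho(z_0,z_1)+d_\varrho(A,F^{\,*}),$ whence $d_\varrho(z_0,z_1)\geqslant 1/2.$

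The hard part is precisely this last case. The inequality $d_\varrho(z_0,z_1)\geqslant 1/2$ alone does not bound $\int_D\varrho^{\,n}\,dm$ from below, because individual points carry zero $n$-capacity and $\varrho$ could a priori concentrate near a single point of $A.$ The point is that such a concentration cannot occur near $z_0$ or $z_1$ without destroying the cheap landings of $F$ and $F^{\,*}$ on $A$; so $\varrho$ must instead form a genuine barrier separating $z_0$ from $z_1$ inside $D,$ with $F$ and $F^{\,*}$ lying on opposite sides. Here the two standing hypotheses enter in an essential way: the uniform size $\inf_{F\in\frak{F}}h(F)>0$ guarantees that $F$ and $F^{\,*}$ are nondegenerate continua, so that separating them costs a definite amount of $\int_D\varrho^{\,n}\,dm,$ while $A$ being a fixed nondegenerate continuum pins down the geometry of the barrier. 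I would make this quantitative either by a direct capacity estimate for the condenser determined by the barrier, or --- what I expect to be cleaner --- by a compactness argument: assuming the conclusion fails, choose $F_j,F_j^{\,*}\in\frak{F}$ with $M(\Gamma(F_j,F_j^{\,*},D))\to 0,$ pass to Hausdorff limits of the continua $F_j,F_j^{\,*}$ (which remain nondegenerate thanks to $h(F_j)\geqslant r_0$), and combine lower semicontinuity of the modulus with the fact that two nondegenerate continua inside $D,$ both linked to $A$ with modulus at least $a,$ cannot be joined with vanishing modulus. Controlling the limit sets so that they do not escape to $\partial D$ --- again via the uniform bound $M(\Gamma(F_j,A,D))\geqslant a$ --- is the remaining technical point.
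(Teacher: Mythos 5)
Your reduction is set up correctly as far as it goes, but the proof stops exactly where the proposition begins. (Note, for calibration, that the paper contains no proof of this statement at all: it is quoted from N\"akki, see \cite[Theorem~4.2]{Na$_1$}, so the only benchmark is that published argument.) The two cases $d_\varrho(F,A)\geqslant 1/4$ and $d_\varrho(A,F^{\,*})\geqslant 1/4$ are handled correctly and yield $\int_D\varrho^n\,dm\geqslant 4^{-n}a$; this part is routine. All of the content lies in the remaining case, and there you establish only that $d_\varrho(z_0,z_1)\geqslant 1/2$ for two points $z_0,z_1\in A$, correctly observe that this is worthless by itself (a pair of points has zero $n$-capacity), and then substitute two unexecuted strategies for an argument. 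The ``direct capacity estimate for the condenser determined by the barrier'' is not an argument: you never define the barrier, and an admissible $\varrho$ for $\Gamma(F,F^{\,*},D)$ need not have any separating structure. Moreover, the hypotheses that must carry the proof --- $\inf\limits_{F\in\frak{F}}h(F)>0$ and the fact that $A$ is a fixed nondegenerate continuum --- never enter your text quantitatively, although the statement is false without them: for instance, a family of balls of radii $\varepsilon_j\rightarrow 0$ placed at distance $\varepsilon_j$ from $A$ satisfies $\inf\limits_{j}M(\Gamma(F_j,A,D))>0$ by a Gr\"otzsch-type estimate, while the modulus of curves joining two such balls at definite mutual distance tends to zero.

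The compactness route you say you prefer cannot work as described, and in fact is circular. In the paper's own application of this proposition (the proof of Lemma~\ref{lem4A}), the members of $\frak{F}$ are continua $F_k$ with $h(F_k)\geqslant r$ that may degenerate to $\partial D$; accordingly, Hausdorff limits of $F_j$, $F_j^{\,*}$ need not be continua in $D$ at all --- they may lie entirely on the boundary. For such limits neither hypothesis passes to the limit: a curve joining $F_j$ to $A$ does not join $F_\infty$ to $A$, and the modulus of $\Gamma(\cdot,\cdot,D)$ enjoys no semicontinuity under Hausdorff convergence of the plates toward the boundary of a general domain (bad boundaries are precisely the setting of this paper). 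Worse, the ``fact'' you invoke to conclude --- that two nondegenerate continua, each linked to $A$ with modulus at least $a$, cannot be joined in $D$ with vanishing modulus --- is, for boundary-touching limit sets, exactly the uniform statement being proved; for sets compactly contained in $D$ it is trivial but says nothing about the infimum over an infinite family. So there is a genuine gap: the hard case is identified but never closed, and a correct completion must exploit, quantitatively, the connectedness and size of $A$ together with $\inf\limits_{F\in\frak{F}}h(F)>0$, as is done in N\"akki's original proof of \cite[Theorem~4.2]{Na$_1$}.
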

Let $p\geqslant 1.$ Due to~\cite[Section~3]{MRSY} we say that a
boundary $D$ is called {\it strongly accessible with respect to
$p$-modulus at $x_0\in \partial D,$} if for any neighborhood $U$ of
the point $x_0\in\partial D$ there is a neighborhood $V\subset U$ of
this point, a compactum $F\subset D$ and a number $\delta>0$ such
that $M_p(\Gamma(E, F, D))\geqslant \delta$ for any continua
$E\subset D$ such that $E\cap \partial U\ne\varnothing\ne E\cap
\partial V.$ The boundary of a domain $D$ is called {\it strongly accessible with respect to
$p$-modulus,} if this is true for any $x_0\in
\partial D.$ When $p=n,$ prefix ''relative to $p$-modulus'' is omitted.
The following lemma is valid (see the statement similar in content
to~\cite[Theorem~6.2]{Na$_1$}).

\medskip
\begin{lemma}\label{lem4A}
{\sl\, A domain $D\subset {\Bbb R}^n$ has a strongly accessible
boundary if and only if $D$ is uniform.}
\end{lemma}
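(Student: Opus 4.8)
The plan is to prove the two implications separately, using the elementary geometry of balls for the easy direction and Proposition~\ref{pr1} as the engine for the hard one.

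First I treat the implication ``uniform $\Rightarrow$ strongly accessible''. Fix $x_0\in\partial D$ and a neighbourhood $U$ of $x_0$, choose $R>0$ with $B(x_0,R)\subset U$, and set $V=B(x_0,R/2)$, so that $V\subset U$ is again a neighbourhood of $x_0$. Any continuum $E\subset D$ meeting both $\partial U$ and $\partial V$ contains a point at Euclidean distance $\geqslant R$ from $x_0$ and a point at distance $R/2$ from $x_0$, whence its Euclidean diameter is at least $R/2$; since $h$ is comparable with the Euclidean metric on bounded sets, $h(E)$ is bounded below by a positive constant depending only on $R$. Fixing once and for all a nondegenerate continuum $F\subset D$ and applying the uniformity of $D$ to the pair $E,F$ (whose chordal diameters both exceed a common positive constant) gives $M(\Gamma(E,F,D))\geqslant\delta$ with $\delta>0$ independent of $E$, which is precisely strong accessibility at $x_0$.

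For the converse, assume $D$ is strongly accessible but, for contradiction, not uniform: there are $r>0$ and continua $F_k,F_k^{\,*}\subset D$ with $h(F_k),h(F_k^{\,*})\geqslant r$ and $M(\Gamma(F_k,F_k^{\,*},D))\to 0$. The heart of the matter is a base estimate in the interior: if two continua $E,F$ of chordal diameter $\geqslant r$ lie in a fixed compact subset $D_d=\{x\in D:\ \mathrm{dist}_h(x,\partial D)\geqslant d\}$, then $M(\Gamma(E,F,D))\geqslant c(r,d)>0$. I would prove this by choosing a compact connected set $K$ with $D_d\subset K\subset D$ (a finite union of balls and connecting tubes, available since $D$ is a domain and $D_d$ is compact), passing to its chordal $\rho$-neighbourhood $N\subset D$, and using that $N$ is a Loewner domain, so that $M(\Gamma(E,F,N))$, and a fortiori $M(\Gamma(E,F,D))$, is bounded below in terms of the relative distance of $E$ and $F$ in $N$, which is controlled because $K$ is bounded (cf.\ the preceding corollary together with \cite[Proposition~4.7]{AS} and \cite{He}). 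This interior estimate is the main obstacle; the rest is bookkeeping built on it.

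Granting the base estimate, pass to subsequences and use compactness of $\overline{D}$ in $\overline{{\Bbb R}^n}$: each of $\{F_k\}$ and $\{F_k^{\,*}\}$ either stays in some $D_d$ or contains points $w_k$ with $w_k\to x_0\in\partial D$. In the interior case the base estimate gives $\inf_k M(\Gamma(F_k,A,D))>0$ for a fixed segment $A\subset D_d$. In the boundary case I apply strong accessibility at $x_0$ with the chordal neighbourhood $U=\{x:h(x,x_0)<s\}$, $s<r/2$: for large $k$ the continuum $F_k$ contains $w_k\in V$ and, since $h(F_k)\geqslant r$, also a point outside $U$, hence meets both $\partial V$ and $\partial U$ inside $D$, so $M(\Gamma(F_k,A,D))\geqslant\delta$ for a fixed interior continuum $A\supset F_{\mathrm{acc}}$ (enlarge the accessibility compactum $F_{\mathrm{acc}}$ to a continuum in $D$ by covering it with finitely many balls and tubes, using monotonicity of the modulus). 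Thus in all cases we obtain fixed interior continua $A$ and $A'$ in $D$ with $\inf_k M(\Gamma(F_k,A,D))>0$ and $\inf_k M(\Gamma(F_k^{\,*},A',D))>0$. Since two fixed nondegenerate continua in $D$ are joined by a tube of positive modulus, $M(\Gamma(A,A',D))>0$; then a twofold application of Proposition~\ref{pr1} (first to the family $\{F_k\}\cup\{A'\}$ with anchor $A$, yielding $\inf_k M(\Gamma(F_k,A',D))>0$, and then to $\{F_k\}\cup\{F_k^{\,*}\}$ with anchor $A'$) gives $\inf_k M(\Gamma(F_k,F_k^{\,*},D))>0$, contradicting $M(\Gamma(F_k,F_k^{\,*},D))\to 0$. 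This contradiction shows that $D$ is uniform and completes the proof.
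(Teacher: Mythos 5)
Your proof of the hard implication (strongly accessible $\Rightarrow$ uniform) has the same skeleton as the paper's: argue by contradiction with continua $F_k, F^{\,*}_k$ satisfying $h(F_k)\geqslant r$, $h(F^{\,*}_k)\geqslant r$ and $M(\Gamma(F_k, F^{\,*}_k, D))\rightarrow 0$; use strong accessibility at a boundary accumulation point to produce a fixed continuum $A$ with $\inf\limits_k M(\Gamma(F_k, A, D))>0$ (enlarging the accessibility compactum to a continuum, as the paper does via \cite[Lemma~1]{Sm}); then invoke Proposition~\ref{pr1}. The genuine divergence is the interior case. The paper picks points $x_k\in F_k$, passes to a limit $x_0\in\overline{D}$, and when $x_0\in D$ simply cites V\"ais\"al\"a's lemma (see \cite[Sect.~10.12]{Va}): interior points enjoy weak flatness, so both cases are handled by one and the same estimate. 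You instead prove a ``base estimate'' for continua inside a compact set $D_d\subset D$, and this is the one soft spot of your write-up: the assertion that the chordal neighbourhood $N$ of the connected compactum $K$ is a Loewner domain is not automatic (the neighbourhood of two tangent tubes, say, fails the Loewner property across the tangency), so $N$ must be constructed as, e.g., a bounded Lipschitz or uniform thickening before the Ahlfors-regularity-plus-Poincar\'e machinery of \cite[Proposition~4.7]{AS} and \cite{He} applies. The simplest repair is to note that in your interior case you may again pick $x_k\in F_k\subset D_d$, extract a limit $x_0\in D_d\subset D$, and use weak flatness at the interior point $x_0$, i.e.\ fall back on the paper's route; then the heavy machinery disappears.

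On two other points your proposal is actually tighter than the paper. First, the paper applies Proposition~\ref{pr1} to the family $\frak{F}=\{F_k\}$ alone and yet concludes $\inf\limits_k M(\Gamma(F_k, F^{\,*}_k, D))>0$; strictly, the proposition only bounds moduli between members of $\frak{F}$, so the $F^{\,*}_k$ must be added to the family and given a common anchor. Your device of joining the two anchors $A$ and $A^{\,\prime}$ by a tube and applying Proposition~\ref{pr1} twice supplies exactly this missing bookkeeping. Second, for the easy implication the paper only cites \cite[Remark~1]{SevSkv$_1$}, whereas you prove it; your ball argument is correct after two cosmetic repairs: for $x_0=\infty$ one should use chordal rather than Euclidean balls, and since the point of $E$ on $\partial U$ may lie far from $x_0$, take instead a point of $E$ on $S(x_0, R)$ (it exists by connectedness of $E$), so that both witness points lie in $\overline{B(x_0,R)}$ and the Euclidean-to-chordal comparison applies.
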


\medskip
\begin{proof}
The fact that uniform domains have strongly accessible boundaries
has been proved in~\cite[Remark~1]{SevSkv$_1$}. It remains to prove
that domains with strongly accessible boundaries are uniform.

We will prove this statement from the opposite. Let $D$ be a domain
which has a strongly accessible boundary, but it is not uniform.
Then there is $r>0$ such that, for any $k\in {\Bbb N}$ there are
continua $F_k$ and $F^{\,*}_k\subset D$ such that $h(F_k)\geqslant
r,$ $h(F^{\,*}_k)\geqslant r,$ however,
\begin{equation}\label{eq4A}
M(\Gamma(F_k, F^*_k, D))<1/k\,.
\end{equation}
Let $x_k\in F_k.$ Since $\overline{D}$ is compact in
$\overline{{\Bbb R}^n},$ we may assume that $x_k\rightarrow x_0\in
\overline{D}.$ Note that the strongly accessibility of the domain
$D$ at the boundary points is assumed to be, and at the inner points
it is even weakly flat, which is the result of V\"{a}is\"{a}l\"{a}'s
lemma (see e.g.~\cite[Sect.~10.12]{Va}, cf.
\cite[Lemma~2.2]{SevSkv$_2$}). Let $U$ be a neighborhood of the
point $x_0$ such that $h(x_0,
\partial U)\leqslant r/2.$ Then there is a neighborhood $V\subset U,$ a compactum $F\subset D$
and a number $\delta>0$ such that the relation $M(\Gamma(E, F,
D))\geqslant \delta$ holds for any continuum $E\subset D$ such that
$E\cap
\partial U\ne\varnothing\ne E\cap
\partial V.$ By the choice of the neighborhood $U,$
we obtain that $F_k\cap U\ne\varnothing\ne F_k\cap (D\setminus U)$
for sufficiently large $k\in {\Bbb N}.$ Observe that, for the same
$k\in {\Bbb N},$ the condition $F_k\cap V\ne\varnothing\ne F_k\cap
(D\setminus V)$ holds. Then, by~\cite[Theorem~1.I.5.46]{Ku} we
obtain that $F_k\cap
\partial U\ne\varnothing\ne F_k\cap \partial V.$ Observe that, a compactum
$F$ can be imbedded in some continuum $A\subset D$
(see~\cite[Lemma~1]{Sm}). Then the inequality $M(\Gamma(E, A,
D))\geqslant \delta$ will only increase. Given the above, we obtain
that
\begin{equation}\label{eq4E}
M(\Gamma(F_k, A, D))\geqslant \delta\qquad \forall\,\,k\geqslant k_0
\end{equation}
for some. Taking~$\inf$ over all $k\geqslant k_0$ in~(\ref{eq4E}),
we obtain that
\begin{equation}\label{eq4F}
\inf\limits_{k\geqslant k_0} M(\Gamma(F_k, A, D))\geqslant \delta\,.
\end{equation}
Set $\frak{F}:=\left\{F_k\right\}_{k=k_0}^{\infty}.$ Now, by the
condition~(\ref{eq4F}) and by Proposition~\ref{pr1}, we obtain that
$\inf\limits_{k\geqslant k_0} M(\Gamma(F_k, F^{\,*}_k, D))>0,$ that
contradicts the assumption made in~(\ref{eq4A}). The resulting
contradiction completes the proof of the lemma.~$\Box$
\end{proof}

\medskip
Obviously, weakly flat boundaries are strongly accessible. Now, by
Lemma~\ref{lem4A} we obtain the following.

\medskip
\begin{corollary}\label{cor1}
{\sl\, If $D\subset {\Bbb R}^n$ has a weakly flat boundary, then $D$
is uniform.}
\end{corollary}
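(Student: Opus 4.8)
The plan is to deduce the corollary directly from Lemma~\ref{lem4A}, which asserts that a domain is uniform if and only if its boundary is strongly accessible. Accordingly, the whole task reduces to verifying that a weakly flat boundary is strongly accessible at every one of its points; granting this, the uniformity of $D$ is immediate from Lemma~\ref{lem4A}.

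To carry out the reduction, I would fix an arbitrary point $x_0\in\partial D$ and an arbitrary neighborhood $U$ of $x_0,$ and apply the weak flatness of $\partial D$ at $x_0$ with the choice $P=1.$ This produces a neighborhood $V\subset U$ of $x_0$ with the property that $M(\Gamma(E, F, D))>1$ whenever $E, F\subset D$ are continua each meeting both $\partial U$ and $\partial V.$ The idea is then to freeze one of the two continua: if I can single out a fixed continuum $F_0\subset D$ that meets both $\partial U$ and $\partial V,$ then taking $F:=F_0$ and $\delta:=1$ turns the weak flatness inequality into $M(\Gamma(E, F_0, D))>\delta$ for every continuum $E\subset D$ with $E\cap\partial U\ne\varnothing\ne E\cap\partial V,$ which is precisely strong accessibility at $x_0.$ As $x_0$ ranges over $\partial D,$ strong accessibility of the whole boundary follows.

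The remaining step, and the one needing a little care, is the existence of the fixed continuum $F_0.$ I would argue as follows: the hypothesis $E\cap\partial U\ne\varnothing$ in the accessibility definition can be met by some $E\subset D$ only when $D\not\subset U$ (otherwise $D\cap\partial U=\varnothing$ and the accessibility requirement is vacuous), so I may assume $D\setminus\overline{U}\ne\varnothing.$ Since $x_0\in\partial D,$ there is a point $a\in D\cap V,$ and I would join it to a point $b\in D\setminus\overline{U}$ by a path in the (path-connected) domain $D$; the image $F_0$ of this path is a continuum lying in $D$ that contains a point of $V\subset U$ together with a point outside $\overline{U},$ hence meets $\partial V$ and $\partial U$ by connectedness. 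This yields the desired $F_0.$ I do not expect any serious obstacle here: the modulus estimate is handed to us by weak flatness with $P=1,$ and the only genuine content beyond Lemma~\ref{lem4A} is this elementary construction of a crossing continuum, together with the observation that the accessibility condition is vacuous in the degenerate case $D\subset U.$
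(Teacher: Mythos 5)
Your route is exactly the paper's: the paper's entire proof of this corollary consists of the remark ``Obviously, weakly flat boundaries are strongly accessible'' followed by an appeal to Lemma~\ref{lem4A}, and your ``freezing'' argument (apply weak flatness with $P=1$, fix one continuum $F_0$ crossing both $\partial U$ and $\partial V$, and read off strong accessibility with $F:=F_0$ and $\delta:=1$) is the standard way to fill in that ``obviously''. The reduction itself, and the observation that a continuum is a compactum so it qualifies as the set $F$ in the definition of strong accessibility, are both fine.

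There is, however, one step that fails as written: from $D\not\subset U$ you conclude that $D\setminus\overline{U}\ne\varnothing$, and this implication is false in general. A connected open $D$ may satisfy $D\subset\overline{U}$ while still meeting $\partial U$: take, for instance, $D=B(0,1)$, $x_0=e_1\in\partial D$ and $U=B(0,2)\setminus\{0\}$, so that $\partial U=S(0,2)\cup\{0\}$, $D\cap\partial U=\{0\}\ne\varnothing$, but $D\setminus\overline{U}=\varnothing$. In this situation the accessibility condition is \emph{not} vacuous (continua in $D$ passing through the origin and through a neighborhood $V$ of $e_1$ do meet both $\partial U$ and $\partial V$), yet your point $b\in D\setminus\overline{U}$ does not exist, so your construction of $F_0$ breaks down precisely in a case that must be handled. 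The repair is immediate and leaves your structure intact: since $D$ is connected and meets both $U$ and its complement, it meets $\partial U$ (by the same Kuratowski-type argument the paper uses); take $b\in D\cap\partial U$ and join $a\in D\cap V$ to $b$ by a path in $D$ --- this path contains $b\in\partial U$, and since it runs from a point of $V$ to the point $b\notin V$, it meets $\partial V$ as well. Even simpler: if any admissible continuum $E_0$ (one meeting both $\partial U$ and $\partial V$) exists at all, just take $F_0:=E_0$ itself; if none exists, the condition holds vacuously. With this correction your proof is complete and coincides in substance with the paper's.
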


\medskip
Given a mapping $f:D\,\rightarrow\,{\Bbb R}^n,$ a set $E\subset D$
and $y\,\in\,{\Bbb R}^n,$ we define the {\it multiplicity function
$N(y,f,E)$} as a number of preimages of the point $y$ in a set $E,$
i.e.
$$
N(y,f,E)\,=\,{\rm card}\,\left\{x\in E: f(x)=y\right\}\,,
$$
\begin{equation}\label{eq1G}
N(f,E)\,=\,\sup\limits_{y\in{\Bbb R}^n}\,N(y,f,E)\,.
\end{equation}
Note that, the concept of a multiplicity function may also be
extended to sets belonging to the closure of a given domain.
Finally, we formulate and prove a key statement about the
discreteness of mapping (see~\cite[Theorem~4.7]{Vu}).

\medskip
\begin{lemma}\label{lem3A}
{\sl\, Suppose that, $p=n,$ the domain $D$ is weakly flat and the
domain $D^{\,\prime}$ is regular. Assume that, there is a Lebesgue
measurable function $Q:{\Bbb R}^n\rightarrow [0, \infty],$ equals to
zero outside of $D^{\,\prime},$ such that the
relations~(\ref{eq2*B})--(\ref{eqB2}) hold for any $y_0\in
\partial D^{\,\prime}.$ Assume that, for any $y_0\in
\partial D^{\,\prime}$ there is $\varepsilon_0=\varepsilon_0(y_0)>0$ and a Lebesgue measurable
function $\psi:(0, \varepsilon_0)\rightarrow [0,\infty]$ such
that~(\ref{eq7***})--(\ref{eq3.7.2}) as $\varepsilon\rightarrow 0,$
where $A(y_0, \varepsilon, \varepsilon_0)$ is defined
in~(\ref{eq1**}).

Then the mapping $f$ has a continuous extension
$\overline{f}:\overline{D}\rightarrow \overline{D^{\,\prime}}_P$
such that $N(f, D)=N(f, \overline {D})<\infty.$ In particular,
$\overline{f}$ is discrete in $\overline{D},$ that is,
$\overline{f}^{\,-1}(P_0)$ consists only from isolated points for
any $P_0\subset E_{D^{\,\prime}}.$ }
\end{lemma}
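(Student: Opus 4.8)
The plan is to prove Lemma~\ref{lem3A} in two stages: first establish the continuous boundary extension $\overline{f}:\overline{D}\rightarrow\overline{D^{\,\prime}}_P$, and then control the multiplicity $N(f,\overline{D})$ to deduce discreteness. For the extension itself, I would invoke Theorem~\ref{th3} together with Remark~\ref{rem3}: indeed, a weakly flat domain $D$ has a strongly accessible (hence, by Corollary~\ref{cor1}, uniform) boundary, $D^{\,\prime}$ is regular by hypothesis, and conditions~(\ref{eq7***})--(\ref{eq3.7.2}) are precisely of the type~(\ref{eq7B})--(\ref{eq7C}) appearing in Remark~\ref{rem3} (with the $o(\cdot)$ growth in~(\ref{eq3.7.2}) being more than enough to force the integral bound). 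Thus the existence and surjectivity of $\overline{f}$ onto $\overline{D^{\,\prime}}_P$ is already in hand; the genuinely new content is the multiplicity estimate.

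For the multiplicity count, first I would record that $N(f,D)<\infty$. This follows because $f$ is an open, discrete and closed mapping onto $D^{\,\prime}$, so its local topological index is finite and constant, and closedness prevents the multiplicity from blowing up near the boundary (this is the analogue of~\cite[Lemma~3.2]{Vu} already cited in the proof of Theorem~\ref{th3} to bound the number of preimages of a fixed point). Write $N:=N(f,D)$. The heart of the matter is to show no extra preimages are created in passing to the closure, i.e. that for every prime end $P_0\in E_{D^{\,\prime}}$ the fiber $\overline{f}^{\,-1}(P_0)$ has at most $N$ points, so that $N(f,\overline{D})=N(f,D)$.

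To do this I would argue by contradiction. Suppose some prime end $P_0$ has at least $N+1$ distinct preimages $b_1,\dots,b_{N+1}\in\partial D$ under $\overline{f}$. Pick interior sequences $x^{(i)}_j\to b_i$ as $j\to\infty$; then $f(x^{(i)}_j)\to P_0$ in the metric $\rho$ for each $i$. Separating the $b_i$ by disjoint neighborhoods in $\overline{D}$, I can join, for each fixed $j$, the points $x^{(1)}_j,\dots,x^{(N+1)}_j$ to fixed interior basepoints by small continua $C^{(i)}_j\subset D$ so that the $C^{(i)}_j$ stay in pairwise disjoint neighborhoods of the $b_i$ (hence $h(C^{(i)}_j)\geqslant\delta>0$), while $\rho(f(C^{(i)}_j))\to 0$ because each $f(C^{(i)}_j)$ shrinks toward $P_0$. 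Here Lemma~\ref{lem1A} is the decisive tool: applied with $p=n$ (noting that a weakly flat, hence uniform, domain satisfies the hypotheses of that lemma), it yields $\rho(f(C^{(i)}_j),P_0)\geqslant\delta_1>0$, which directly contradicts $\rho(f(C^{(i)}_j))\to 0$ together with $f(x^{(i)}_j)\to P_0$.

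The main obstacle I anticipate is the bookkeeping in this last contradiction: one must arrange the continua $C^{(i)}_j$ to simultaneously have chordal diameter bounded below (so Lemma~\ref{lem1A} applies), have $\rho$-diameter of their $f$-images tending to zero, and accumulate at genuinely distinct boundary points—yet all map near the single prime end $P_0$. The tension is that Lemma~\ref{lem1A} says a continuum whose image has small $\rho$-diameter cannot cluster at a prime end, so having $N+1$ such continua clustering at $P_0$ is exactly what must be excluded; reconciling the ``shrinking image'' requirement with the need that the liftings really do approach $N+1$ \emph{separate} boundary points is where care is needed, and I expect the discreteness and openness of $f$ (ensuring distinct interior fibers stay apart) and closedness (ensuring boundary behavior is controlled) to be used precisely at this junction. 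The conclusion that $\overline{f}^{\,-1}(P_0)$ is finite, and hence consists of isolated points, then follows since $\overline{D}$ is compact and the fibers are finite.~$\Box$
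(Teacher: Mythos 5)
Your first stage (continuous extension via Remark~\ref{rem3}, finiteness of $N(f,D)$) matches the paper, which invokes Remark~\ref{rem3} and cites \cite[Theorem~2.8]{MS} for $N(f,D)<\infty$. The gap is in the second stage, and it is exactly the one you flag yourself but do not resolve: your construction of the continua $C^{(i)}_j$ cannot satisfy the hypotheses of Lemma~\ref{lem1A}. If $C^{(i)}_j$ joins $x^{(i)}_j$ to a \emph{fixed} interior basepoint $a_i$, then $f(C^{(i)}_j)$ contains both $f(a_i)$ and $f(x^{(i)}_j)\rightarrow P_0$, so $\rho(f(C^{(i)}_j))\geqslant \rho(f(a_i),P_0)-o(1)$ is bounded \emph{below}, and the requirement $\rho(f(C^{(i)}_j))\rightarrow 0$ fails; conversely, continua whose $f$-images shrink must be taken inside preimages of small sets, and then the lower bound $h(C^{(i)}_j)\geqslant\delta$ is precisely what you have no right to assume. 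So the contradiction you aim for never gets off the ground; naming the tension is not the same as resolving it, and openness/discreteness/closedness alone, invoked ``at this junction,'' do not produce such continua.

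The paper resolves the tension with two ideas absent from your proposal. First, using regularity of $D^{\,\prime}$, it transfers the prime end $P_0$ to an ordinary boundary point $y_0=g^{-1}(P_0)$ of a domain $D_0$ with locally quasiconformal boundary, passes to the mapping $F$ of $D$ into $D_0$ obtained by composing $f$ with $g^{-1}$, takes connected neighborhoods $U^{\,\prime}_p\subset B(y_0,1/p)$ of $y_0$ in $D_0$, and considers, for each of the supposed $k+1$ preimages $x_i$ (where $k=N(f,D)$), the component $V_p^i$ of $F^{-1}(U^{\,\prime}_p)$ with $x_i\in\overline{V_p^i}$; local connectivity of $D$ at $\partial D$ (a consequence of weak flatness) is what guarantees such a component exists. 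Lemma~\ref{lem1A} is then applied \emph{inside} these components, in the contrapositive direction: if $h(\overline{V_p^i})$ did not tend to $0$, one could draw paths $C_m\subset V_{p_m}^{i_0}$ with $h(|C_m|)\geqslant r_0/2$ whose images automatically lie in $B(y_0,1/p_m)$, hence have shrinking $\rho$-diameter while approaching $P_0$, contradicting Lemma~\ref{lem1A}. This is how ``image small'' and ``continuum large'' coexist: the continua live in preimage components of shrinking neighborhoods, so smallness of the image is automatic, and largeness of the continuum is the hypothesis being refuted rather than something to be arranged. Second, once the $\overline{V_p^i}$ shrink to the distinct points $x_i$ and are pairwise disjoint, the paper invokes \cite[Lemma~3.6]{Vu}: the closed discrete open map $F$ sends each component $V_p^i$ \emph{onto} $U^{\,\prime}_p$, so every point of the interior set $U^{\,\prime}_p$ has at least $k+1$ preimages, contradicting $N(f,D)=k$. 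Your proposal contains neither the preimage-component device nor this surjectivity step, and without them the inequality $N(f,\overline{D})\leqslant N(f,D)$ is not proved.
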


\medskip
\begin{proof}
First of all, the possibility of continuous extension of $f$ to a
mapping $\overline{f}:\overline{D}\rightarrow
\overline{D^{\,\prime}}_P$ follows by Remark~\ref{rem3}. Note also
that $N(f, D) <\infty,$ see~\cite[Theorem~2.8]{MS}. Let us to prove
that $N(f, D)=N(f, \overline{D}).$ Next we will reason using the
scheme proof of Theorem~4.7 in \cite{Vu}. Assume the contrary. Then
there are points $P_0\in E_{D^{\,\prime}}$ and $x_1,x_2,\ldots, x_k,
x_{k+1}\in
\partial D$ such that $f(x_i)=P_0,$ $i=1,2,\ldots, k+1$ and $k:=N(f,
D).$ Since by the assumption $D^{\,\prime}$ is regular, there is a
mapping $g$ of some domain with a locally quasiconformal boundary
$D_0$ onto $D^{\,\prime}.$ Let us consider the mapping $F:=f\circ
g^{\,-1}.$ Note that, by the definition of a domain with a locally
quasiconformal boundary, $D_0$ is locally connected on $\partial
D_0.$ Note that, the mapping $F$ has a continuous extension
$\overline{F}:\overline{D}\rightarrow \overline{D_0}$ to
$\overline{D},$ and $\overline{F}(\overline{D})=\overline{D_0}$
(this follows from the fact that each of the mappings $f$ and
$g^{\,-1}$ has a continuous extension to $\overline{D}$ and
$\overline{D^{\,\prime}}_P,$ respectively). Set
$y_0:=\overline{F}(P_0)\in D_0.$ Now, for any $p\in {\Bbb N}$ there
is a neighborhood $\widetilde{U^{\,\prime}_p}\subset B(y_0, 1/p)$ of
$y_0$ such that the set $\widetilde{U^{\,\prime}_p}\cap
D_0=U^{\,\prime}_p$ is connected.

\medskip
Let us to prove that, for any $i=1,2,\ldots, k+1$ there is a
component $V_p^i$ of the set $F^{\,-1}(U^{\,\prime}_p)$ such that
$x_i\in\overline{V_p^i}.$ Fix $i=1,2,\ldots, k+1.$ By the continuity
of $F$ in $\overline{D},$ there is $r_i=r_i(x_i)>0$ such that
$f(B(x_i, r_i)\cap D)\subset U^{\,\prime}_p.$
By~\cite[Lemma~3.15]{MRSY}, a domain with a weakly flat boundary is
locally connected on its boundary. Thus, we may find a neighborhood
$W_i\subset B(x_i, r_i)$ of the point $x_i$ such that $W_i\cap D$ is
connected. Then $W_i\cap D$ belongs to one and only one component
$V^p_i$ of the set $F^{\,-1}(U^{\,\prime}_p),$ while
$x_i\in\overline{W_i\cap D}\subset \overline{V_p^i},$ as required.

Next we show that the sets $\overline{V_p^i}$ are disjoint for any
$i=1,2,\ldots, k+1$ and large enough $p\in {\Bbb N}.$ In turn, we
prove for this that $h(\overline{V_p^i})\rightarrow 0$ as
$p\rightarrow\infty$ for each fixed $i=1,2,\ldots, k+1.$ Let us
prove the opposite. Then there is $1\leqslant i_0\leqslant k+1,$ a
number $r_0>0,$ $r_0<\frac{1}{2}\min\limits_{1\leqslant i,
j\leqslant k+1, i\ne j}h(x_i, x_j)$ and an increasing sequence of
numbers $p_m,$ $m=1,2,\ldots,$ such that $S_h(x_{i_0}, r_0)\cap
\overline{V_{p_m}^{i_0}}\ne\varnothing,$ where $S_h(x_0, r)=\{x\in
\overline{{\Bbb R}^n}: h(x, x_0)=r\},$ and $h$ denotes the chordal
metric in $\overline{{\Bbb R}^n}.$ In this case, there are $a_m,
b_m\in V_{p_m}^{i_0}$ such that $a_m\rightarrow x_{i_0}$ as
$m\rightarrow\infty$ and $h(a_m, b_m)\geqslant r_0/2.$ Join the
points $a_m$ and $b_m$ by a path $C_m,$ which entirely belongs to
$V_{p_m}^{i_0}.$ Then $h(|C_m|)\geqslant r_0/2$ for $m=1,2,\ldots .$
On the other hand, since $|C_m|\subset f(V_{p_m}^{i_0})\subset
B(y_0, 1/p_m),$ then simultaneously $h(F(|C_m|))\rightarrow 0$ as
$m\rightarrow\infty$ and $h(F(|C_m|), y_0)\rightarrow 0$ as
$m\rightarrow\infty.$ Now, by the definition of the metric $\rho$
in~(\ref{eq5}) and of the mapping $g,$ we obtain that
$\rho(f(|C_m|))\rightarrow 0$ as $m\rightarrow\infty$ and
$\rho(f(|C_m|), y_0)\rightarrow 0$ as $m\rightarrow\infty,$ that
contradicts with Lemma~\ref{lem1A}. The resulting contradiction
indicates the incorrectness of the above assumption.

By~\cite[Lemma~3.6]{Vu} $F$ is a mapping of $\overline{V_p^i}$ onto
$U^{\,\prime}_p$ for any $i=1,2,\ldots, k, k+1.$ Thus, $N(f, D)=N(F,
D)\geqslant k+1,$ which contradicts the definition of the number
$k.$ The obtained contradiction refutes the assumption that $N(f,
\overline{D})>N(f, D).$ The lemma is proved.~$\Box$
\end{proof}

\medskip
Let us now turn to the main results of this section.

\medskip
{\it Proof of Theorem~\ref{th4}}. In the case  1), we choose
$\psi(t)=\frac{1}{t\log\frac{1}{t}},$ and in the case 2), we set
$$\psi(t)\quad=\quad \left \{\begin{array}{rr}
1/[tq^{\frac{1}{n-1}}_{y_0}(t)]\ , & \ t\in (\varepsilon,
\varepsilon_0)\ ,
\\ 0\ ,  &  \ t\notin (\varepsilon,
\varepsilon_0)\ ,
\end{array} \right.$$
Observe that, the relations~(\ref{eq7***})--(\ref{eq3.7.2}) hold for
these functions $\psi,$ where $p=n$ (the proof of this facts may be
found in~\cite[Proof of Theorem~1.1]{Sev$_1$}). The desired
conclusion follows from Lemma~\ref{lem3A}.~$\Box$


\medskip
{\bf \noindent Evgeny Sevost'yanov} \\
{\bf 1.} Zhytomyr Ivan Franko State University,  \\
40 Bol'shaya Berdichevskaya Str., 10 008  Zhytomyr, UKRAINE \\
{\bf 2.} Institute of Applied Mathematics and Mechanics\\
of NAS of Ukraine, \\
1 Dobrovol'skogo Str., 84 100 Slavyansk,  UKRAINE\\
esevostyanov2009@gmail.com

\end{document}